\title{On the period of Lehn, Lehn, Sorger, \\ and van Straten's symplectic eightfold}
\author{Nicolas Addington and Franco Giovenzana}
\date{}
\newcommand \A {\mathcal A}
\newcommand \C {\mathbb C}
\renewcommand \O {\mathcal O}
\renewcommand \P {\mathbb P}
\newcommand \Q {\mathbb Q}
\newcommand \Z {\mathbb Z}
\DeclareMathOperator \Br {Br}
\DeclareMathOperator \td {td}
\DeclareMathOperator \rank {rank}
\DeclareMathOperator \Gr {Gr}
\DeclareMathOperator \Pic {Pic}
\DeclareMathOperator \ch {ch}
\DeclareMathOperator \disc {disc}
\newcommand \Ktop {K_{\rm top}}
\newcommand \Knum {K_{\rm num}}
\newcommand \prim {{\rm prim}}
\newcommand \Xfour {X^{[4]}}
\newtheorem{thm}{Theorem}
\newtheorem{prop}[thm]{Proposition}
\newtheorem{lem}[thm]{Lemma}
\newtheorem{conj}[thm]{Conjecture}
\renewcommand \phi \varphi
\begin{document}

\maketitle

\begin{abstract}
\noindent For the irreducible holomorphic symplectic eightfold $Z$ associated to a cubic fourfold $Y$ not containing a plane, we show that a natural Abel--Jacobi map from $H^4_\prim(Y)$ to $H^2_\prim(Z)$ is a Hodge isometry.  We describe the full $H^2(Z)$ in terms of the Mukai lattice of the K3 category $\A$ of $Y$.  We give numerical conditions for $Z$ to be birational to a moduli space of sheaves on a K3 surface or to Hilb$^4$(K3).  We propose a conjecture on how to use $Z$ to produce equivalences from $\A$ to the derived category of a K3 surface.

\end{abstract}

\section*{Introduction}
Beauville and Donagi \cite{bd} showed that if $Y\subset \P^5_\C$ is a smooth cubic fourfold, then the variety $F$ of lines on $Y$ is an irreducible holomorphic symplectic fourfold, deformation-equivalent to the Hilbert scheme of two points on a K3 surface.  They showed moreover that the universal line $P \subset Y \times F$ induces a Hodge isometry
\[ [P]_*\colon H^4_\prim(Y,\Z) \xrightarrow{\,\sim\,} H^2_\prim(F,\Z), \]
with the intersection pairing on the left-hand side and the opposite of the Beauville--Bogomolov--Fujiki pairing on the right.

The first author later refined this result by describing the full $H^2(F,\Z)$ in terms of the Mukai lattice of the K3 category of $Y$, and characterized the cubics for which $F$ is birational to the Hilbert scheme of two points on a K3 surface, or more generally to a moduli space of sheaves on a K3 surface \cite{hassett_vs_galkin}.

In his survey paper \cite{hassett_survey}, 
Hassett remarked that it would be useful to have analogous results for the symplectic eightfold $Z$ constructed from twisted cubics on $Y$ by Lehn, Lehn, Sorger, and van Straten \cite{llsvs}.  This is the subject of the present paper.  We begin with a statement on primitive cohomology.

\begin{thm} \label{H2prim}
Let $Y$ be a smooth cubic fourfold not containing a plane.  Let
\[ u\colon M \to Z \]
be the contraction from the ten-dimensional space of generalized twisted cubics on $Y$ to the LLSvS symplectic eightfold.  Let $C \subset Y \times M$ be the universal curve.  Then the pullback
\[ u^*: H^2(Z,\Z) \to H^2(M,\Z) \]
is injective, and the map
\begin{equation} \label{map1}
[C]_*\colon H^4(Y,\Z) \to H^2(M,\Z)
\end{equation}
restricts to a Hodge isometry
\begin{equation} \label{iso1}
[C]_*\colon H^4_\prim(Y,\Z) \xrightarrow{\,\sim\,} u^*(H^2_\prim(Z,\Z)),
\end{equation}
with the intersection pairing on the left-hand side and the opposite of the Beauville--Bogomolov--Fujiki pairing on the right.
\end{thm}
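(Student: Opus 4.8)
The plan is to treat $[C]_*$ as the Abel--Jacobi map of an algebraic correspondence and to reduce the statement to a single intersection-theoretic constant, exactly as Beauville and Donagi did for the variety of lines. Since $[C]$ is a Hodge class on $Y\times M$, the map \eqref{map1} is automatically a morphism of Hodge structures, so I would first dispose of the injectivity of $u^*$. Here the geometry of the LLSvS construction is decisive: $u$ factors as $M \xrightarrow{a} Z' \xrightarrow{\sigma} Z$, where $a$ is a $\P^2$-fibration and $\sigma$ is the blow-down of the exceptional divisor $E$ over the Lagrangian $Y\subset Z$. The projective-bundle and blow-up formulas then give $H^2(M,\Z)=u^*H^2(Z,\Z)\oplus \Z\,[E]\oplus \Z\,h$, exhibiting $u^*$ as a split injection; in particular $u^*H^2(Z,\Z)$ is a saturated sublattice, which will matter for the integral statement at the end.

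Next I would pin down the Hodge-theoretic image. Because $H^4_\prim(Y)$ is of K3 type with $h^{3,1}=1$, the essential analytic input is to show that $[C]_*$ carries the line $H^{3,1}(Y)$ isomorphically onto $u^*H^{2,0}(Z)=\C\,u^*\sigma_Z$. I would do this by the standard residue/tangent computation: a first-order deformation of $Y$ pairs with the symplectic form of $Z$ pulled back to $M$ through the family of curves $C$, and one checks this pairing is nonzero, i.e.\ that $u^*\sigma_Z$ is a nonzero multiple of $[C]_*\omega$ for $\omega$ spanning $H^{3,1}(Y)$. Granting this, $[C]_*$ is a nonzero morphism of Hodge structures; since $H^4_\prim(Y)$ is irreducible as a Hodge structure (for very general $Y$, with the general case following by local constancy of $[C]$ and of the pairing over the moduli of cubics not containing a plane), the map is injective, and matching Hodge types together with the rank count ($22=22$) shows it is an isomorphism of rational Hodge structures onto $u^*H^2_\prim(Z,\Q)$. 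Orthogonality of the image to $h$, to $[E]$, and to the polarization follows because these are algebraic while the transcendental part of $H^4_\prim(Y)$ is not.

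The crux is the compatibility of pairings, and this is where I expect the real work to lie. The clean formulation is that the self-correspondence ${}^t[C]\circ[C]$ acts on the irreducible Hodge structure $H^4_\prim(Y)$ as multiplication by a scalar $\lambda$, whence $q_Z\bigl([C]_*\alpha,[C]_*\beta\bigr)=\lambda\,\langle\alpha,\beta\rangle_Y$ for all primitive $\alpha,\beta$; the content of the theorem is that $\lambda=-1$. In principle $\lambda$ is a universal intersection number on $Y$ extracted from the excess/normal-bundle geometry of the universal twisted cubic, and I would compute it either directly --- via the Fujiki relation on $Z$, reducing integrals of the form $\int_Z(u^*\gamma)^8$ along the fibration $u$ to intersection numbers on $Y$ --- or by comparison with the variety of lines, transporting Beauville--Donagi's normalization through an incidence correspondence relating twisted cubics to the lines on their spanning cubic surfaces. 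The main obstacle is controlling this constant honestly rather than merely up to sign or up to an undetermined factor. Finally, to upgrade the rational isomorphism to an integral Hodge isometry onto the \emph{saturated} sublattice $u^*H^2_\prim(Z,\Z)$, I would compare discriminant forms: both $H^4_\prim(Y,\Z)(-1)$ and $H^2_\prim(Z,\Z)$ are known even lattices of rank $22$, and an isometry with $\lambda=-1$ between lattices with matching discriminant groups must be an isomorphism onto the saturation, completing the proof.
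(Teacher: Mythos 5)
Your outline is exactly the Beauville--Donagi template, and its structural reductions are sound: injectivity of $u^*$ via the factorization through $Z'$, Hodge-type considerations plus irreducibility of $H^4_\prim(Y,\Q)$ for very general $Y$, and the observation that any two Hodge-invariant pairings on an irreducible Hodge structure with $\mathrm{End}=\Q$ are proportional, so that $q_Z([C]_*\alpha,[C]_*\beta)=\lambda\langle\alpha,\beta\rangle_Y$ for a single scalar $\lambda$. But the proof then stops precisely where the theorem begins: you never compute $\lambda$, and you yourself flag that ``controlling this constant honestly'' is the obstacle. Neither of your two suggestions is workable as stated. The Fujiki-relation route requires evaluating numbers like $\int_M u^*\big([C]_*\alpha\big)^8\cdot(\text{fiber classes})$ on the tenfold $M$, i.e.\ explicit intersection theory for the universal family of generalized twisted cubics --- for lines Beauville and Donagi could do this by Schubert calculus on the fourfold $F\subset\Gr(2,6)$, but no comparably concrete model of $M$ or $Z$ exists, and the incidence correspondence with lines through spanning cubic surfaces transports no normalization without a computation of the same difficulty. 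This is why the paper takes a completely different mechanism: it specializes to a Pfaffian cubic whose associated K3 surface $X$ has Picard rank $1$, identifies an open subset of $Z$ with an open subset of $\Xfour$ via the Fourier--Mukai kernel $T=I_\Gamma\circ I_C(2h)$, proves that the complement has codimension $\geq 2$ (Proposition \ref{key_prop}, which itself costs a Grothendieck--Riemann--Roch/Schubert computation of $j^*B=9h$, $j^*\tilde H=14h$ and the Bayer--Macr\`i movable cone of $\Xfour$), and then imports the sign \emph{and} the integral structure wholesale from O'Grady's theorem on $H^2(\Xfour,\Z)$; Theorem \ref{H2prim} is afterwards deduced from Theorem \ref{H2full} at a Noether--Lefschetz-general cubic by a Chern-character comparison of $v(I_C^\vee(-3h))$ with $[C]$. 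In your framework all of that content is hidden inside the unproven equality $\lambda=-1$.

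Two secondary steps are also asserted rather than proved. First, the nonvanishing of $[C]_*$ on $H^{3,1}(Y)$ is ``checked'' only nominally; some argument (a degeneration, or the identification of $u^*\sigma_Z$ as in \cite{al}) is needed before irreducibility can be invoked. Second, the closing discriminant-form argument presupposes that $u^*H^2_\prim(Z,\Z)$ is a ``known'' lattice, i.e.\ that one already knows the Beauville--Bogomolov square and divisibility of the natural polarization class on $Z$; but for the LLSvS eightfold this was not previously established and is itself part of what the theorem encodes --- in the paper it falls out of the identification $H^2(Z,\Z)\cong\langle\lambda_2-\lambda_1\rangle^\perp$, whose discriminant computation is what makes the saturation argument legitimate. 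So even granting $\lambda=-1$ rationally, the upgrade to an integral isometry onto the saturated sublattice is circular as written.
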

\medskip

For a statement about the full $H^2(Z,\Z)$, we must recall Kuznetsov's K3 category
\[ \A = \langle \O_Y, \O_Y(h), \O_Y(2h) \rangle^\perp \subset D^b(Y), \]
the Mukai lattice introduced in \cite[\S2]{at},
\[ \Ktop(\A) = \{ [\O_Y], [\O_Y(h)], [\O_Y(2h)] \}^\perp \subset \Ktop(Y), \]
and the two special classes $\lambda_1, \lambda_2 \in \Ktop(\A)$.  For a recent account of the lattice theory of cubic fourfolds, see Huybrechts' survey paper \cite{huybrechts_survey}.

\begin{thm} \label{H2full}
Continue the notation of Theorem \ref{H2prim}.  Let
\[ \Phi^K\colon \Ktop(\A) \subset \Ktop(Y) \to \Ktop(M) \]
be the map on topological $K$-theory induced by the Fourier--Mukai kernel $I_C^\vee(-3h) \in D^b(Y \times M)$.  Then the map
\begin{equation} \label{map2}
c_1\circ \Phi^K :\Ktop(\A) \to H^2(M, \Z)
\end{equation}
restricts to a Hodge isometry
\begin{equation}
\langle\lambda_2-\lambda_1\rangle^{\perp} \xrightarrow{\,\sim\,} u^*(H^2(Z,\Z)), \label{iso2}
\end{equation}
with the Euler pairing on the left-hand side and the opposite of the Beauville--Bogomolov--Fujiki pairing on the right.
\end{thm}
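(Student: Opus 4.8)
The plan is to reduce everything to Theorem~\ref{H2prim} by a Grothendieck--Riemann--Roch computation for the kernel $\mathcal{E}=I_C^\vee(-3h)$, followed by a short lattice argument. First I would compute, for $v\in\Ktop(\A)$,
\[ \ch(\Phi^K(v)) = p_{M*}\!\bigl(\ch(\mathcal{E})\cdot p_Y^*(\ch(v)\,\td Y)\bigr), \]
and isolate its degree-two part $c_1\Phi^K(v)=\ch_1(\Phi^K(v))$. Expanding with $[I_C]=[\O]-[\O_C]$, and using that the three defining conditions $\chi(\O_Y(ih),v)=0$ for $v\in\Ktop(\A)$ make the lower-order contributions collapse (this is what the twist by $-3h$ is arranged to achieve), I expect the result to take the shape
\[ c_1\Phi^K(v) = [C]_*(\text{codim-two part of }v) + \ell(v)\,\xi, \]
where $\xi\in H^2(M,\Z)$ is a fixed class coming from the fibres of $u$, i.e.\ not in $u^*(H^2(Z,\Z))$, and $\ell$ is a linear functional. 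The decisive point to extract from the computation is that $\ell$ is, up to a nonzero constant, the Euler pairing $\chi(\lambda_2-\lambda_1,\,-)$. Granting this, $c_1\Phi^K(v)$ lies in $u^*(H^2(Z,\Z))$ precisely when $v\perp\lambda_2-\lambda_1$, so the map does send $\langle\lambda_2-\lambda_1\rangle^\perp$ into $u^*(H^2(Z,\Z))$.

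On the smaller sublattice $\langle\lambda_1,\lambda_2\rangle^\perp$, which \cite{at} identifies isometrically with $H^4_\prim(Y,\Z)$ through the codimension-two part of the Mukai vector, we have $\chi(\lambda_2-\lambda_1,v)=\chi(\lambda_2,v)-\chi(\lambda_1,v)=0$, so the exceptional term disappears and $c_1\circ\Phi^K$ reduces to $[C]_*$ under this identification. Theorem~\ref{H2prim} then shows immediately that it restricts to a Hodge isometry of $\langle\lambda_1,\lambda_2\rangle^\perp$ onto $u^*(H^2_\prim(Z,\Z))$, for the Euler pairing and the opposite BBF pairing.

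It remains to add the one missing rank. Recall from \cite{at} that, for the Euler pairing, $\lambda_2-\lambda_1$ is primitive with $\chi(\lambda_2-\lambda_1,\lambda_2-\lambda_1)=-6$, while $\lambda_1+\lambda_2$ satisfies $\chi(\lambda_1+\lambda_2,\lambda_1+\lambda_2)=-2$ and is orthogonal to both $\lambda_2-\lambda_1$ and $\langle\lambda_1,\lambda_2\rangle^\perp$. By the previous paragraphs the image $c_1\Phi^K(\lambda_1+\lambda_2)$ lies in $u^*(H^2(Z,\Z))$ and is orthogonal to $u^*(H^2_\prim(Z,\Z))$, hence spans the remaining rank-one direction. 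Computing its self-intersection on $M$ and matching it with $\chi(\lambda_1+\lambda_2,\lambda_1+\lambda_2)=-2$ then shows that $c_1\circ\Phi^K$ is isometric on the finite-index sublattice $\langle\lambda_1,\lambda_2\rangle^\perp\oplus\Z(\lambda_1+\lambda_2)$; since it is already defined integrally on all of $\langle\lambda_2-\lambda_1\rangle^\perp$ with image in $u^*(H^2(Z,\Z))$, the isometry relation extends by $\Q$-bilinearity, giving a full-rank integral isometric embedding $\langle\lambda_2-\lambda_1\rangle^\perp\hookrightarrow u^*(H^2(Z,\Z))$. For surjectivity I compare discriminants: $\Ktop(\A)$ is unimodular, so the complement of the primitive class $\lambda_2-\lambda_1$ of square $-6$ has discriminant $6$, as does $H^2$ of a variety of $K3^{[4]}$ type; an isometric embedding of equal-rank lattices scales the discriminant by the square of its index, so the index is $1$ and the map is onto. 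That $\Phi^K$ is induced by an algebraic kernel makes it compatible with the weight-two Hodge structures, and $c_1$ carries the $(2,0)$ line across, so the embedding is a Hodge isometry.

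The step I expect to be the main obstacle is the Grothendieck--Riemann--Roch bookkeeping underlying the first and third paragraphs. One must compute enough of the Chern character of the universal ideal sheaf $I_C$---and evaluate $c_1\Phi^K$ on $\lambda_1$ and $\lambda_2$ explicitly---both to show that the fibre direction $\xi$ enters with coefficient exactly $\chi(\lambda_2-\lambda_1,\,-)$ and to pin down the self-intersection of $c_1\Phi^K(\lambda_1+\lambda_2)$. Once these two numerical facts are in hand, the reduction to Theorem~\ref{H2prim}, the placement of $\lambda_1+\lambda_2$, and the final discriminant and Hodge-theoretic arguments are comparatively formal.
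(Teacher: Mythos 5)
Your reduction is circular relative to this paper: Theorem \ref{H2prim} is not proved independently here, but is deduced (in \S\ref{2to1}) \emph{from} Theorem \ref{H2full} at a Noether--Lefschetz-general cubic, using \cite[Prop.~2.3]{at} to identify $\langle\lambda_1,\lambda_2\rangle^\perp$ with $H^4_\prim(Y,\Z)$ --- the same identification you invoke, run in the opposite direction. To make your plan self-contained you would first have to prove Theorem \ref{H2prim} directly, which is essentially the whole difficulty. The paper instead specializes to a very general Pfaffian cubic, uses the Addington--Lehn kernel $T = I_\Gamma \circ I_C(2h)$ to identify an open set $Z_0 \subset Z$ with an open subset of $\Xfour$, proves that $Z \setminus Z_0$ has codimension at least $2$ and that $Z_0 \supset Y$ (Proposition \ref{key_prop}, via the computations $j^*B = 9h$, $j^*\tilde H = 14h$ and the movable cone of $\Xfour$), and then transports O'Grady's \emph{integral} Hodge isometry $[I_\xi]^\perp \xrightarrow{\,\sim\,} H^2(\Xfour,\Z)$ across; the class $\lambda_2-\lambda_1$ enters because the equivalence $D^b(X) \to \A$ sends $[I_\xi]$ to it.

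Even granting Theorem \ref{H2prim}, the steps you defer are where the content lives, and two of them fail as stated. First, $u$ factors as a $\P^2$-fibration over the blowup of $Z$ along $Y$, so $u^*(H^2(Z,\Q))$ has corank $2$ in $H^2(M,\Q)$, not $1$: your ansatz with a single fibre class $\xi$ and a single functional $\ell$ already requires an argument that both complementary directions appear with coefficients proportional to $\chi(\lambda_2-\lambda_1,-)$; this is forced by the truth of the theorem, but nothing in a Grothendieck--Riemann--Roch expansion hands it to you, and GRR yields only rational conclusions anyway, whereas membership in $u^*(H^2(Z,\Z))$ and the integral isometry require knowing that $u^*(H^2(Z,\Z))$ is saturated in $H^2(M,\Z)$ --- the paper obtains integrality precisely by routing through $\Ktop(\Xfour)$ and topological K-theory rather than Chern characters. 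Second, the ``self-intersection on $M$'' of $c_1\Phi^K(\lambda_1+\lambda_2)$ is not its Beauville--Bogomolov--Fujiki square: on a K3$^{[4]}$-type eightfold one would need the Fujiki relation $\int \alpha^8 = 105\, q(\alpha)^4$, transported to the tenfold $M$, which moreover determines $q(\alpha)$ only up to sign; and your claim that this class is orthogonal to $u^*(H^2_\prim(Z,\Z))$ is likewise unproven. Your closing lattice argument is sound --- indeed $\langle\lambda_2-\lambda_1\rangle^\perp = \langle\lambda_1,\lambda_2\rangle^\perp \oplus \Z(\lambda_1+\lambda_2)$ holds on the nose, and both sides of \eqref{iso2} have discriminant $6$ --- but it engages only after these unestablished geometric and integral inputs.
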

\noindent One of the main ideas of \cite{al} is that $Z$ is a moduli space of complexes in $\A$ containing the projections of $\O_y$ for points $y \in Y$; the K-theory class of these complexes is $\lambda_2-\lambda_1$, so \eqref{iso2} is natural in light of O'Grady's description of the period of a moduli space of sheaves on a K3 surface \cite{og}.  Li, Pertusi, and Zhao developed this idea further using Bridgeland stability conditions in \cite{lpz}; note that their class $2\lambda_1+\lambda_2$ is related to our $\lambda_2-\lambda_1$ by an auto\-equivalence of $\A$.  Statements similar to Theorem \ref{H2full} appear in \cite[Prop.~5.2]{lpz} and \cite[Thm.~29.2(2)]{blamnups}.  We offer the present paper nonetheless because our proofs require less machinery, the statement in Theorem \ref{H2prim} is very classical and explicit, and we thought it worthwhile to articulate Theorem \ref{birational} and Conjecture \ref{the_conj} below.\footnote{After the first version of this paper was posted, Li, Pertusi, and Zhao informed us that the submitted version of \cite{lpz} includes a statement analogous to our Theorem 3(c).}\bigskip

Next we recall Hassett's Noether--Lefschetz divisors $\mathcal C_d$ in the moduli space $\mathcal C$ of cubic fourfolds, indexed by an integer $d$ satisfying
\begin{equation} \label{condition*} \tag{$*$}
d > 6 \text{ and } d \equiv 0 \text{ or } 2 \!\!\!\!\pmod 6.
\end{equation}
Again we refer to the survey papers of Huybrechts \cite{huybrechts_survey} and Hassett \cite{hassett_survey}.  From \cite{at}, \cite{blamnups}, and \cite{hassett_vs_galkin} we know that the K3 category $\A$ is equivalent to the derived category of coherent sheaves on a K3 surface, and the fourfold $F$ is birational to a moduli space of sheaves on a K3 surface, if and only if the cubic $Y \in \mathcal C_d$ for some $d$ satisfying
\begin{equation} \label{condition**} \tag{$**$}
\text{$d/2$ is not divisible by 9 or any prime $p \equiv 2 \!\!\!\!\pmod 3$,}
\end{equation}
or equivalently,
\begin{equation} \tag{$**$}
\text{$d$ divides $2n^2+2n+2$ for some $n \in \Z$}.
\end{equation}
%
From \cite{huybrechts_twisted} and \cite{blamnups} we know that the K3 category $\A$ is equivalent to the derived category of \emph{twisted} sheaves on a K3 surface, and $F$ is birational to a moduli space of twisted sheaves, if and only if $d$ satisfies the weaker condition
\begin{multline} \label{condition**'} \tag{$**'$}
\text{In the prime factorization of } d/2, \\[-1ex]
\text{primes } p \equiv 2 \!\!\!\!\pmod 3 \text{ appear with even exponents.}
\end{multline}
On the other hand, from \cite{hassett_vs_galkin} we know that $F$ is birational to the Hilbert scheme of two points on a K3 surface if and only if $d$ satisfies the stronger condition
\begin{equation} \label{condition***} \tag{$*{*}*$}
d\text{ is of the form }\frac{2n^2+2n+2}{a^2}\text{ for some }n,a \in \Z.
\end{equation}
Let us introduce a new condition
\begin{equation} \label{condition***'} \tag{$*{*}*'$}
d\text{ is of the form }\frac{6n^2+6n+2}{a^2}\text{ for some }n,a \in \Z.
\end{equation}
It is strictly stronger than \eqref{condition**}
but incomparable to \eqref{condition***}: the first few $d$s satisfying \eqref{condition***} are 14, 26, 38, 42, 62, and 86, whereas the first few $d$s satisfying \eqref{condition***'} are 14, 38, 62, 74, and 86.

\begin{thm} \label{birational}
Let $Y$ be a cubic fourfold not containing a plane --- that is, not lying in $\mathcal C_8$ --- so the symplectic eightfold $Z$ is defined.
\begin{enumerate}
\item $Z$ is birational to a moduli space of sheaves on a K3 surface if and only if $Y \in \mathcal C_d$ for some $d$ satisfying \eqref{condition**}.
\item $Z$ is birational to a moduli space of twisted sheaves on a K3 surface if and only if $Y \in \mathcal C_d$ for some $d$ satisfying \eqref{condition**'}.
\item $Z$ is birational to the Hilbert scheme of four points on a K3 surface if and only if $Y \in \mathcal C_d$ for some $d$ satisfying \eqref{condition***'}.
\end{enumerate}
\end{thm}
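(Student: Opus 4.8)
The engine is Theorem~\ref{H2full} together with the global Torelli theorem for irreducible holomorphic symplectic manifolds (Verbitsky, Markman) and the Bayer--Macrì theory of Bridgeland moduli spaces. By~\eqref{iso2} the lattice $u^*(H^2(Z,\Z))$ is the orthogonal complement of $v:=\lambda_2-\lambda_1$ in $\Ktop(\A)$, and since $v^2=6=2\cdot4-2$, the Mukai lattice $\Ktop(\A)$ plays for $Z$ the role that Markman's extended lattice plays for a manifold of $K3^{[4]}$-type, with $v$ as its Mukai vector. My first step would be to record this dictionary precisely: $\Ktop(\A)$ is abstractly isometric to $U^{\oplus4}\oplus E_8(-1)^{\oplus2}$; for very general $Y\in\mathcal C_d$ the algebraic part $\Ktop(\A)_{\mathrm{alg}}$ is a rank-three lattice of signature $(2,1)$ and discriminant $d$ containing the positive-definite $A_2=\langle\lambda_1,\lambda_2\rangle$; and a birational map between manifolds of $K3^{[4]}$-type induces a parallel-transport Hodge isometry of $H^2$ that extends to the Mukai lattice (Markman). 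With this in hand each clause becomes a statement about the pair $(\Ktop(\A),v)$ as a lattice with Hodge structure.

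For~(a) and~(b) the Mukai vector only has to become \emph{some} Mukai vector of a moduli problem on a K3 surface. Running Torelli forwards, a birational map from $Z$ to a moduli space of (twisted) sheaves on a K3 surface $S$ gives a parallel-transport Hodge isometry of the $H^2$'s, which extends to a Hodge isometry $\Ktop(\A)\cong\tilde H(S,\alpha)$ after Markman's identification of the Mukai lattice of a moduli space with that of the underlying surface (here $\alpha$ is a Brauer class, trivial in the untwisted case). Backwards, once $\A\simeq D^b(S,\alpha)$ the eightfold $Z$ is a moduli space of objects of $D^b(S,\alpha)$ by~\cite{al} and~\cite{lpz}, hence birational to a Gieseker moduli space by Bayer--Macrì. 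But $\Ktop(\A)\cong\tilde H(S,\alpha)$ as Hodge structures is precisely the condition $\A\simeq D^b(S,\alpha)$, which the results recalled before the theorem (\cite{at}, \cite{blamnups}, \cite{huybrechts_twisted}, \cite{hassett_vs_galkin}) translate into~\eqref{condition**} and~\eqref{condition**'}. This settles~(a) and~(b).

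For~(c) I would single out the Hilbert scheme among all moduli spaces by the rank-one condition on the Mukai vector. Concretely, $Z$ is birational to $\mathrm{Hilb}^4(S)$ if and only if some Hodge isometry $\Ktop(\A)\cong\tilde H(S)$ carries $v$ to the Hilbert vector $(1,0,-3)$; applying the line-bundle twists that act transitively on the primitive rank-one classes of square $6$, this is equivalent to the existence of a primitive isotropic class $e\in\Ktop(\A)_{\mathrm{alg}}$ with $\langle v,e\rangle=1$. Since the Gram matrix $\bigl(\begin{smallmatrix}6&1\\1&0\end{smallmatrix}\bigr)$ is unimodular, such an $e$ spans with $v$ a hyperbolic plane $U$ that splits off, forcing $\Ktop(\A)_{\mathrm{alg}}\cong U\oplus\langle d\rangle$ with $v=(1,3)\in U$; conversely any such splitting yields $e=(0,1)$ and, because the whole transcendental Hodge structure then sits on $U^\perp$, an honest K3 surface $S$ realizing the equivalence by surjectivity of the period map. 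Restricting to $v^\perp$, the condition reads $v^\perp\cong\langle-6\rangle\oplus\langle d\rangle$, and unwinding the discriminant-form constraint on the fixed class $v=\lambda_2-\lambda_1$ turns it into the requirement that $d=(6n^2+6n+2)/a^2$ for some $n,a\in\Z$, which is~\eqref{condition***'}.

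I expect the main obstacle to be the explicit lattice computation in~(c). Because $v$ is pinned down by the geometry, one cannot argue by comparing abstract isometry classes: one must locate $v=\lambda_2-\lambda_1$ inside $\Ktop(\A)_{\mathrm{alg}}$ accurately enough to decide when an isotropic $e$ with $\langle v,e\rangle=1$ exists. This requires writing a Gram matrix for $\Ktop(\A)_{\mathrm{alg}}$ in each residue class of $d$ modulo $6$, computing how the extra algebraic class pairs with $\lambda_1,\lambda_2$, and solving the resulting representability problem --- the step that produces the form $6n^2+6n+2$ and explains why its leading coefficient is $v^2=6$ rather than the $2$ of the Fano-variety analogue~\eqref{condition***}. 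Two smaller points also need care: checking the orientation and parallel-transport hypotheses when applying Torelli, and confirming that the abstract isotropic class $e$ upgrades to a genuine equivalence $\A\simeq D^b(S)$ with $v$ the Hilbert vector --- the latter being automatic once $U$ splits off, since the transcendental Hodge structure then lives entirely on $U^\perp$.
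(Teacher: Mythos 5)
Your treatment of parts (a) and (b) is essentially sound and follows the same skeleton as the paper (Torelli for K3$^{[4]}$-type plus an extension of the $H^2$-isometry to the Mukai lattices), though with heavier inputs at two points. For the extension step the paper uses nothing but Nikulin \cite[Cor.~1.5.2]{nikulin}: any Hodge isometry $\langle\lambda_2-\lambda_1\rangle^\perp \cong v^\perp$ extends to $\Ktop(\A)\cong\tilde H(S,\Z)$ with $\lambda_2-\lambda_1\mapsto\pm v$, because the discriminant group of $v^\perp$ is $\Z/6$, whose automorphism group is $\pm1$; this avoids both Markman's extended-lattice formalism and the verification you would owe that the embedding $u^*(H^2(Z,\Z))\subset\Ktop(\A)$ of Theorem~\ref{H2full} agrees with Markman's monodromy-invariant one. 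Likewise your converse routes through $\A\simeq D^b(S,\alpha)$ and the Bridgeland moduli description of $Z$ from \cite{lpz}, whereas the paper needs only the lattice-level statement of \cite[Prop.~1.24 and 1.13]{huybrechts_survey}, the Gieseker moduli space $M_h(v)$ with $H^2(M_h(v),\Z)\cong v^\perp$, and Torelli. Both routes work; yours proves more than is needed.

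Part (c), however, has a genuine gap. The equivalence between the existence of an isotropic $w\in\Knum(\A)$ with $\chi(w,\lambda_2-\lambda_1)=1$ and the arithmetic condition \eqref{condition***'} \emph{is} the substance of (c), and you explicitly defer it (``the main obstacle''), so the proof is incomplete exactly where the theorem lives. Moreover your intermediate reduction --- that such an $e$ forces $\Ktop(\A)_{\mathrm{alg}}\cong U\oplus\langle d\rangle$ and $v^\perp\cong\langle-6\rangle\oplus\langle d\rangle$ --- is false for $Y$ lying on several Noether--Lefschetz divisors, where $\Knum(\A)$ has rank greater than $3$; the condition in (c) is ``$Y\in\mathcal C_d$ for \emph{some} $d$,'' and both directions must handle rank-$3$ sublattices rather than the whole algebraic lattice. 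The paper does this as follows: given $w$, set $n=\chi(w,\lambda_1)$, so $L=\langle\lambda_1,\lambda_2,w\rangle$ has Gram matrix
\[ \begin{pmatrix} -2 & 1 & n \\ 1 & -2 & n+1 \\ n & n+1 & 0 \end{pmatrix}, \]
whence $\disc(L)=6n^2+6n+2$; saturating $L$ in $\Knum(\A)$ with index $a$ gives $d=(6n^2+6n+2)/a^2$ and $Y\in\mathcal C_d$ by \cite[Prop.~2.5]{at}. Conversely, given such a $d$, one first shows $a\equiv1\pmod 6$ (since $6n^2+6n+2$ is the norm of the primitive vector $(n,-n-1)$ in $A_2$, $d$ satisfies \eqref{condition**} and $a$ is a product of primes $\equiv1\pmod3$), writes $d=6k+2$, $a=3m+1$, invokes \cite[Lem.~9]{hassett_vs_galkin} to produce $\tau$ with $\langle\lambda_1,\lambda_2,\tau\rangle$ primitive and prescribed Gram matrix, and exhibits $w=(m-n)\lambda_1+(2m-n)\lambda_2+a\tau$ with $\chi(w,w)=0$ and $\chi(w,\lambda_2-\lambda_1)=1$. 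None of this saturation-and-construction arithmetic appears in your proposal, and without it (c) is unproved.
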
\medskip

From \cite[Thm.~1.2(c)]{bm} we know that if $Z$ is birational to a moduli space of sheaves on a K3 surface, then $Z$ is \emph{isomorphic} to a moduli space of stable complexes on the same surface for some Bridgeland stability condition.  This suggests an alternative approach to proving that $\A$ is equivalent to the derived category of a K3 surface if and only if $d$ satisfies \eqref{condition**}:\footnote{On the other hand, if one assumes this fact (that $\A$ is geometric if and only if $d$ satisfies \eqref{condition**}), then Conjecture \ref{the_conj} seems to follow from \cite[Prop.~5.7]{lpz}.}
\begin{conj} \label{the_conj}
Suppose that $Y \in \mathcal C_d$ for some $d$ satisfying \eqref{condition**}, so $Z$ is isomorphic to a moduli space of Bridgeland-stable complexes on some K3 surface $S$.  Let $\alpha \in \Br(Z)$ be the obstruction to the existence of a universal complex on $Z \times S$, and let $U \in D^b(Z \times S, \alpha \boxtimes 1)$ be a twisted universal complex.  Recall that $Y$ is naturally embedded in $Z$, and consider the restriction $U|_{Y \times S}$, which can be untwisted because $\Br(Y) = 0$.  Then the functor $\A \to D^b(S)$ induced by $U|_{Y \times S}$ is an equivalence.
\end{conj}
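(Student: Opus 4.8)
The plan is to compare the functor $\Phi\colon \A \to D^b(S)$ induced by $U|_{Y\times S}$ with an abstract equivalence and thereby reduce the claim to a statement about line-bundle twists. Since $d$ satisfies \eqref{condition**}, the category $\A$ is equivalent to $D^b(S)$; fix such an equivalence $\Xi\colon \A \xrightarrow{\sim} D^b(S)$, and let $\Xi$ also denote its action on $K$-theory, so that $v := \Xi(\lambda_2-\lambda_1)$ is a Mukai vector on $S$. By the moduli interpretation of \cite{al} and \cite{lpz}, together with \cite[Thm.~1.2(c)]{bm}, the eightfold is $Z \cong M_\sigma(S,v)$ for a suitable Bridgeland stability condition $\sigma$, with twisted universal object $U$, and the natural embedding $j\colon Y \hookrightarrow Z$ sends a point $y$ to the stable object $E_{j(y)} := U|_{\{j(y)\}\times S}$, which one expects to coincide with $\Xi(P_y)$, where $P_y := \mathrm{pr}_{\A}(\O_y)$ is the projection of the skyscraper sheaf. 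Granting this identification, the first step is to record that $\Phi$ and the extended functor $\widetilde\Xi := \Xi \circ \mathrm{pr}_{\A}\colon D^b(Y) \to D^b(S)$ agree on skyscrapers: $\Phi(\O_y) = (j\times 1)^*U|_{\{y\}\times S} = E_{j(y)} \cong \Xi(P_y) = \widetilde\Xi(\O_y)$.

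Next I would upgrade this pointwise agreement to an isomorphism of functors up to a twist. The two Fourier--Mukai kernels $U|_{Y\times S}$ and $K_{\widetilde\Xi} \in D^b(Y\times S)$ are honest (untwisted) objects, the first because $\Br(Y)=0$, and on each fibre $\{y\}\times S$ they both restrict to the \emph{simple} object $E_{j(y)}$, simple because it is $\sigma$-stable. Hence both families induce the same morphism $j\colon Y \to M_\sigma(S,v)=Z$, and two families of simple objects inducing the same moduli map differ by the pullback of a line bundle from the base: $U|_{Y\times S} \cong K_{\widetilde\Xi} \otimes \pi_Y^* L$. Since $\Pic(Y) = \Z\cdot\O_Y(h)$, we have $L \cong \O_Y(kh)$ for some $k\in\Z$, whence $\Phi \cong \widetilde\Xi\circ\bigl(-\otimes\O_Y(kh)\bigr)$ as functors $D^b(Y)\to D^b(S)$. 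Restricting to $\A$ gives
\[ \Phi|_{\A}(A) \cong \Xi\bigl(\mathrm{pr}_{\A}(A\otimes\O_Y(kh))\bigr), \]
and $A \mapsto \mathrm{pr}_{\A}(A\otimes\O_Y(kh))$ is a power of Kuznetsov's degree-shift autoequivalence of $\A$, hence itself an autoequivalence. Thus $\Phi|_{\A}$ is a composition of equivalences, and the conjecture follows.

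The hard part is the identification $E_{j(y)} \cong \Xi(P_y)$ taken for granted above, that is, the compatibility of the isomorphism $Z \cong M_\sigma(S,v)$ with the objects lying over the special fourfold $Y \subset Z$. The isomorphism furnished by \cite[Thm.~1.2(c)]{bm} is produced abstractly, through wall-crossing and the minimal model program for moduli of Bridgeland-stable objects, and a priori it matches the two spaces only as varieties, not their universal families over the non-generic locus $Y$. What one really needs is that the objects $P_y = \mathrm{pr}_{\A}(\O_y)$ remain $\sigma$-stable across the relevant chamber walls, so that the birational map $Z \dashrightarrow M_\sigma(S,v)$ is a morphism --- indeed the expected one --- along $Y$. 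Establishing this seems to require the Bridgeland-theoretic analysis of \cite{lpz}; concretely, I expect the identification to be essentially the content of \cite[Prop.~5.7]{lpz} once the geometricity of $\A$ for $d$ satisfying \eqref{condition**} is in hand, as indicated in the footnote. This is why we record the statement as a conjecture rather than a theorem.
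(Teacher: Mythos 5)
The statement you are attempting is posed in the paper as Conjecture \ref{the_conj}: the paper contains no proof of it, only the remark that a proof ``would require a careful analysis of the wrong-way slices $U|_{Z \times x}$ for $x \in S$, and of their restrictions to $Y$,'' together with a footnote observing that, once one grants that $\A$ is geometric exactly when $d$ satisfies \eqref{condition**}, the conjecture seems to follow from \cite[Prop.~5.7]{lpz}. So there is no proof in the paper to compare against --- and your attempt is not a proof either, as you candidly acknowledge in your final paragraph. The genuine gap is the identification $E_{j(y)} \cong \Xi(P_y)$, which you ``grant'' at the outset: this is precisely the mathematical content of the conjecture. The isomorphism $Z \cong M_\sigma(S,v)$ furnished by \cite[Thm.~1.2(c)]{bm} is produced abstractly (via Torelli and wall-crossing), and nothing in its construction matches the stable objects it parametrizes over the special locus $Y \subset Z$ with the projections $\mathrm{pr}_\A(\O_y)$ transported by some equivalence $\Xi$; establishing that compatibility (stability of the $P_y$ for the relevant $\sigma$, behavior across walls, the position of $Y$ under the birational identification) is where all the work lies. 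Note also that the conjecture quantifies over the given presentation $(S,\sigma,v,U)$, so even settling the \cite{lpz}-compatible presentation would leave something to check for an arbitrary choice of moduli description.

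That said, your outer reduction is sensible and matches the paper's footnote in spirit. Granted the pointwise identification, the comparison of kernels via simplicity of the $\sigma$-stable slices does yield $U|_{Y\times S} \cong K_{\widetilde\Xi} \otimes \pi_Y^*\O_Y(kh)$ --- though this step needs the standard machinery for families of complexes (relative $R\mathcal{H}om$, fiberwise simplicity and vanishing of negative self-Exts), which you gloss over; and the functor $A \mapsto \mathrm{pr}_\A(A \otimes \O_Y(kh))$ is indeed an autoequivalence of $\A$, though for general $k$ it is safer to see this not literally as a power of Kuznetsov's degree-shift functor but as the composition of $\otimes\,\O_Y(kh)\colon \A \to \langle \O_Y(kh), \O_Y((k+1)h), \O_Y((k+2)h)\rangle^\perp$ with the mutation equivalence back to $\A$; the naive identity between $\mathrm{pr}_\A((-)\otimes\O_Y(2h))|_\A$ and the square of the degree-shift functor itself requires an argument. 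In short: you have rediscovered the conditional reduction the authors point to in their footnote and correctly located the open problem, but the statement remains, as in the paper, a conjecture.
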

\noindent We also expect a similar statement with \eqref{condition**'} and twisted K3 surfaces.  To prove Conjecture \ref{the_conj} would require a careful analysis of the ``wrong-way slices'' $U|_{Z \times x}$ for $x \in S$, and of their restrictions to $Y$.

\paragraph{Acknowledgments.}N.A.\ thanks the organizers of the Japanese--European Symposium on Symplectic Varieties at RIMS, Kyoto in October 2015, where these ideas first took shape.  We thank M.~Lehn, Ch.~Lehn, G.~Mongardi, J.~C.~Ottem, R.~Takahashi, L.~Giovenzana, and D.~C.~Veniani for valuable conversations, and A.~Kuznetsov for detailed comments on an earlier version of the paper.  N.A.\ was partially supported by NSF grant no.\ DMS-1902213.  F.G.\ was supported by DFG research grants Le 3093/2-1 and Le 3093/2-2.
\addtolength \textheight {2em} 

\section{Proof of Theorems \ref*{H2prim} and \ref*{H2full}}

\subsection{Recollections and reductions} \label{reductions}

For a smooth cubic fourfold $Y$ not containing a plane, Lehn, Lehn, Sorger, and van Straten considered the irreducible component $M \subset \operatorname{Hilb}^{3n+1}(Y)$ containing twisted cubics, which is a smooth tenfold \cite[Thm.~A]{llsvs}.  They produced a contraction $u\colon M \to Z$ onto an irreducible holomorphic symplectic eightfold, and a copy of $Y$ naturally embedded in $Z$ [ibid., Thm.~B].  Addington and Lehn showed that $Z$ is deformation-equivalent to the Hilbert scheme of four points on a K3 surface \cite{al}.

\begin{lem} \label{inj-lemma}
The pullback map $u^*\colon H^2(Z,\Z)\to H^2(M,\Z)$ is injective.
\end{lem}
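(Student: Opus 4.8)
The plan is to use the finer structure of the contraction recorded by Lehn, Lehn, Sorger, and van Straten, namely that $u$ factors as
\[ M \xrightarrow{\ a\ } Z' \xrightarrow{\ \sigma\ } Z, \]
where $Y$ is embedded in $Z$ as a smooth Lagrangian submanifold, $\sigma$ is the blow-up $Z' = \operatorname{Bl}_Y Z$ with exceptional divisor $E = \P(N_{Y/Z})$ a $\P^3$-bundle over $Y$, and $a$ is a fibration all of whose fibres are projective planes. Since $u^* = a^* \circ \sigma^*$, it is enough to show that each of $\sigma^*$ and $a^*$ is injective on $H^2$.

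For $\sigma$ this is the integral blow-up formula: for a smooth center of codimension $4$ one has the splitting $H^2(Z',\Z) = \sigma^* H^2(Z,\Z) \oplus \Z\,[E]$, so $\sigma^*$ is injective. For $a$ I would argue through the Leray spectral sequence of $a$ with $\Z$ coefficients. Since every fibre is a $\P^2$, proper base change identifies the stalk of $R^q a_* \Z$ at a point with $H^q(\P^2,\Z)$; in particular $a_*\Z = \Z$ and $R^1 a_* \Z = 0$. The five-term exact sequence of low-degree terms,
\[ 0 \to H^1(Z',\Z) \to H^1(M,\Z) \to H^0(Z', R^1 a_* \Z) \to H^2(Z',\Z) \xrightarrow{\ a^*\ } H^2(M,\Z), \]
then shows that $\ker a^*$ is the image of $H^0(Z', R^1 a_* \Z) = 0$, so $a^*$ is injective. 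Composing the two steps proves the lemma.

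The step that demands care is the geometric input from \cite{llsvs}: that $\sigma$ is genuinely the blow-up of $Z$ along the smooth subvariety $Y$, and that $a$ is a fibration whose fibres are \emph{all} isomorphic to $\P^2$, so that the vanishing $R^1 a_* \Z = 0$ holds uniformly rather than only over a dense open set. Once these structural facts are in hand, the cohomological bookkeeping above is routine.

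As an alternative that bypasses the factorization, one may apply the same five-term argument to $u$ itself. Every fibre of $u$ is rationally connected --- it is a single $\P^2$ over $Z \setminus Y$, while over a point of $Y$ it is a $\P^2$-fibration over the corresponding fibre $\P^3$ of $E \to Y$ --- hence simply connected with vanishing $H^1$, so that $R^1 u_* \Z = 0$ and $u^*$ is injective directly. I would likely present the factorization version, as it isolates the two elementary mechanisms (the blow-up formula and the projective-bundle structure) most transparently.
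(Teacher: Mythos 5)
Your main argument coincides with the paper's own proof: both factor $u$ as the blow-up $Z' \to Z$ along $Y$ followed by the $\P^2$-fibration $M \to Z'$ and observe that each map is injective on $H^2$; the paper states these injections without detail, and your blow-up formula and Leray five-term computation correctly supply them. Your proposal is correct (your alternative via $R^1 u_* \Z = 0$ also works, while the paper's stated alternative instead combines rational injectivity for the surjection $u$ with torsion-freeness of $H^2(Z,\Z)$ from simple connectedness).
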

\begin{proof}
The map $u$ factors as the blowup $Z' \to Z$ along $Y \subset Z$, and a $\P^2$-fibration $M \to Z'$, both of which induce injections on cohomology.  Alternatively, $u$ is surjective, so $u^*\colon H^*(Z,\Q) \to H^*(M,\Q)$ is injective by \cite[Lem.~7.28]{voisin_book}, and $H^2(Z,\Z)$ is torsion-free because $Z$ is simply-connected.
\end{proof}

The map \eqref{map1} is clearly a map of Hodge structures; let us argue that \eqref{map2} is as well.  For basics about the map on topological K-theory induced by a Fourier--Mukai kernel, and its compatibility with the more usual map on rational cohomology, we refer to \cite[\S2.1]{at}.  The Hodge structure on $\Ktop(\A) \subset \Ktop(Y)$ is pulled back via the Mukai vector $v\colon \Ktop(Y) \to \bigoplus H^{2i}(Y,\Q)$.  We have a commutative diagram
\begin{equation} \label{Phi_commutes_with_v}
\begin{gathered} 
\xymatrix{
\Ktop(\A) \ar[r]^{\Phi^K} \ar[d]_v & \Ktop(M) \ar[d]_v \\
H^*(Y,\Q) \ar[r]^{\Phi^H} & H^*(M,\Q),
}
\end{gathered}
\end{equation}
where $\Phi^K$ and $\Phi^H$ are the maps induced by $I_C^\vee(-3h) \in D^b(Y \times M)$.  Thus we see that $v \circ \Phi^K$ is a map of Hodge structures.  To get the map $c_1 \circ \Phi^K$ of \eqref{map2}, we first do $v \circ \Phi^K$, then multiply by $(\td M)^{-1/2}$ which preserves Hodge type, then take the degree-2 part.

Now it is a topological question whether the restrictions \eqref{iso1} and \eqref{iso2} take values in the subgroups claimed, and whether they are isometries, so it is enough to prove the claims for a single cubic $Y$.  In a bit more detail, let $U \subset \P\left(H^0(\O_{\mathbb{P}^5}(3))\right) = \P^{55}$ be the Zariski open set of cubic fourfolds that are smooth and do not contain a plane.  The construction of $M$ and $Z$ works in families, as discussed in the introduction to \cite{llsvs}, so we get smooth families $\mathcal{Y} \to U$, $\mathcal{M} \to \mathcal{Z} \to U$, and $\mathscr C\subset \mathcal{M} \times_U \mathcal{Y}$.  Then \eqref{iso1} and \eqref{iso2} are specializations of maps of local systems over $U$, so to prove the claims about them it is enough to do so at one point $Y \in U$.

In \S\S\ref{pfaff_sec}--\ref{sec1.3} we do this for \eqref{iso2} when $Y$ is a Pfaffian cubic fourfold whose associated K3 surface $X$ has Picard rank 1.  In \S\ref{2to1} we do it for \eqref{iso1} when $Y$ does not belong to any Noether--Lefschetz divisor.
\addtolength \textheight {-2em} 

\subsection{Proof of Theorem \ref*{H2full} for a very general Pfaffian cubic} \label{pfaff_sec}

To a generic choice of six two-forms on $\C^6$, Beauville and Donagi \cite{bd} associate a ``Pfaffian'' cubic fourfold $Y \subset \P^5$ and a K3 surface $X \subset \P^8$ that parametrizes a complete family of quartic scrolls on $Y$.  In \cite{al} the family is called $\Gamma \subset X \times Y$; it is generically 4-to-1 over $Y$.  We will recall more details in the next section, but we do not need them yet.

Assuming that $Y$ does not contain a plane, we continue to let $M$ be the space of generalized twisted cubics on $Y$, $u\colon M \to Z$ the contraction onto the LLSvS eightfold, and $C \subset Y \times M$ the universal curve.

We will finish proving Theorem \ref{H2full} under the assumption that $X$ has Picard rank 1, which by our discussion above is enough to prove the theorem in general.  We will construct a commutative diagram of Fourier--Mukai functors between $D^b(X)$ and $D^b(M)$, and deduce from it a commutative diagram of maps between lattices, most of which are isomorphisms, and ultimately compare the map \eqref{map2} with the composition
\[ [I_\xi]^\perp \subset \Ktop(X) \xrightarrow{I_\Xi^\vee} \Ktop(\Xfour) \xrightarrow{c_1} H^2(\Xfour, \Z), \]
which is known to be a Hodge isometry; here $\xi \subset X$ is a subscheme of length 4 and $\Xi \subset X \times \Xfour$ is the universal subscheme.   For standard facts about Fourier--Mukai kernels, their adjoints, induced maps on cohomology, etc.\ we refer to Huybrechts' book \cite[Ch.~5]{huybrechts_fm}. \bigskip

We consider the convolution
\[ T := I_\Gamma \circ I_C(2h) \in D^b(X \times M). \]
and recall some facts proved in \cite[\S3]{al}, with a few improvements: \smallskip
\begin{enumerate}
\item There is a non-empty, Zariski open set $M_0 \subset M$ such that the restriction $T[1]|_{X \times M_0}$ is quasi-isomorphic to an $M_0$-flat family of ideal sheaves of length-4 subschemes of $X$.  We take $M_0$ as big as possible. \smallskip
\item There is an open set $Z_0 \subset Z$ such that $M_0 = u^{-1}(Z_0)$. \smallskip

Proof: From \cite[Prop.~2]{al} we know that $M_0$ is a union of fibers of $u$, so $M_0 = u^{-1}(Z_0)$ for some $Z_0 \subset Z$, maybe not open \emph{a priori}.  But $u$ is surjective, so $Z \setminus Z_0 = u(M \setminus M_0)$, and $u$ is proper, hence closed, so $Z \setminus Z_0$ is closed.\smallskip
\item The classifying map $t'\colon M_0 \to \Xfour$ descends to an open immersion $t\colon Z_0 \to \Xfour$. \smallskip

Proof: From \cite[\S3]{al} we know that $t'$ descends to a map $t$ that is injective.  But an injective holomorphic map between complex manifolds of the same dimension is an open immersion by the proof of \cite[Prop. on p.~19]{gh}; see also \cite{rosay}.  (So the smaller open set $Z_1 \subset Z_0$ of \cite[\S3]{al} is unnecessary.) \smallskip
\end{enumerate}
The key to the present proof will be the following fact, whose proof we postpone to the next section:
\begin{prop} \label{key_prop}
If the K3 surface $X$ has Picard rank 1, then the open set $Z_0$ contains $Y$, and its complement $Z \setminus Z_0$ has codimension at least 2 in $Z$.
\end{prop}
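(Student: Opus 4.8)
The plan is to prove the two assertions by separate arguments: the inclusion $Y \subset Z_0$ by an explicit fibrewise computation, and the codimension bound by hyperkähler birational geometry together with the Picard-rank-one hypothesis. For the inclusion, recall that $u$ factors through the $\P^2$-fibration $M \to Z'$ and the blowup $Z' \to Z$ of $Y$, so that $u^{-1}(Y)$ is a divisor in $M$ (the non-arithmetically-Cohen--Macaulay locus) and $u^{-1}(y)$ is the family of such degenerate cubics associated to a point $y \in Y$. The generically $4$-to-$1$ family $\Gamma \subset X \times Y$ assigns to $y$ the fibre $\xi_y \subset X$ of $\Gamma \to Y$, a subscheme of length four. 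I would first check that $y \mapsto \xi_y$ defines a morphism $Y \to \Xfour$ rather than merely a rational map: with $\rho(X)=1$ there are no unexpected curves on $X$, so the four points cannot drop in length, and $\xi_y$ stays a genuine point of $\Xfour$ even where they collide. I would then compute the convolution $I_\Gamma \circ I_C(2h)$ fibrewise over $u^{-1}(y)$ and identify $T[1]$ there with $I_{\xi_y}$; since $M_0$ is taken as large as possible, this shows $u^{-1}(Y) \subset M_0$, i.e.\ $Y \subset Z_0$.

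For the codimension bound, the open immersion $t\colon Z_0 \hookrightarrow \Xfour$ extends to a birational map $\phi\colon Z \dashrightarrow \Xfour$ between projective hyperkähler manifolds of $K3^{[4]}$ type. Since birational hyperkähler manifolds are isomorphic in codimension one, there are open subsets $Z_1 \subset Z$ and $W_1 \subset \Xfour$, each with complement of codimension at least two, and an isomorphism $\psi\colon Z_1 \xrightarrow{\sim} W_1$ extending $\phi$. The crux is to show $Z_1 \subset Z_0$. On $Z_1$ the isomorphism $\psi$ pulls the universal subscheme $\Xi \subset X \times \Xfour$ back to a flat family of length-four ideal sheaves, which coincides with $T[1]$ on the dense open set $Z_0 \cap Z_1$; if this coincidence extends over all of $Z_1$, then $T[1]$ is an ideal sheaf there and maximality of $M_0$ gives $Z_1 \subset Z_0$, whence $Z \setminus Z_0 \subset Z \setminus Z_1$ has codimension at least two.

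The main obstacle is exactly this extension. A priori $T[1]$ is only an object of $D^b(X \times M)$, and two complexes agreeing on a dense open set need not agree in codimension one, so the failure locus of the condition ``$T[1]$ is an ideal sheaf'' might contain a divisor. To rule this out I would bound that locus directly on the arithmetically-Cohen--Macaulay part of $M$, where the classifying map of \cite{al} is defined: there $T[1]$ can fail to be an ideal sheaf only over subschemes of $X$ that degenerate in a controlled way, such as points colliding along a curve or lying on a special curve. Here $\rho(X)=1$ is decisive, since it forces every such locus in $\Xfour$ to have codimension at least two, and hence its preimage in $Z$ as well. Matching this geometric picture with the abstract statement that $\phi$ is an isomorphism off a codimension-two set then identifies $Z_0$ as the complement of a codimension-two locus containing $Y$, completing the proof needed in \S\ref{sec1.3}.
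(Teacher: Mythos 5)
Your first assertion ($Y \subset Z_0$) is argued along essentially the paper's own lines, and it is fine in outline: the point is indeed that the fibres $\Gamma_\varphi$ of $\Gamma \to Y$ must stay zero-dimensional, so that $y \mapsto \xi_y$ is a genuine morphism $j\colon Y \to \Xfour$ and the ideal-sheaf description of $T[1]$ persists over all of $u^{-1}(Y)$. But your justification ``with $\rho(X)=1$ there are no unexpected curves on $X$, so the four points cannot drop in length'' is a gesture, not a proof; the worry is not length dropping but the fibre jumping to positive dimension. The paper's Lemma \ref{Gamma_flat} supplies the actual argument: the preimage of $\Gamma_\varphi$ in the normalization $\tilde\Sigma_\varphi$, a $\P^4$-bundle over $\P(\ker\varphi) \cong \P^1$, meets each $\P^4$-fibre in at most one point (else the linear section would contain a line, but $X$ contains no lines) and meets only finitely many fibres (else it would give a section, hence a smooth rational curve on $X$, excluded since $\rho(X)=1$). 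This is a fixable level of detail, so I would not call it a gap by itself.

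The genuine gap is in your codimension bound, at exactly the step you flag as ``the main obstacle.'' You correctly see that the abstract codimension-two biregularity of $Z \dashrightarrow \Xfour$ is not enough (the paper makes this very remark right after stating Proposition \ref{key_prop}) and that the crux is $Z_1 \subset Z_0$, i.e.\ that the ideal-sheaf condition on $T[1]$ cannot fail along a divisor. But your proposed fix --- classifying the failure modes on the arithmetically-Cohen--Macaulay locus as ``points colliding along a curve or lying on a special curve'' and asserting that $\rho(X)=1$ pushes such loci into codimension two --- is not an argument: $M \setminus M_0$ is \emph{by definition} the failure locus, there is no a priori classification of how $T[1]$ degenerates there, and nothing in your sketch ties a hypothetical divisorial component to a special configuration of points on $X$. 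The paper closes this gap by a completely different, global mechanism, which has no counterpart in your proposal. It computes $j^*B = 9h$ and $j^*\tilde H = 14h$ (Lemma \ref{Pullbacks}, via Grothendieck--Riemann--Roch, the Eagon--Northcott resolution, and Schubert calculus checked in Macaulay2); it computes the movable cone of $\Xfour$ via Bayer--Macr\`i, finding walls $\tilde H$ and $55\tilde H - 84B$; and it invokes the fact that effective divisors pair non-negatively with movable ones. Then any effective divisor $D \subset \Xfour$ disjoint from $j(Y)$ would satisfy $j^*D = 0$, hence --- since $\rho(X)=1$ makes $\Pic(\Xfour)$ of rank two --- be a multiple of $9\tilde H - 14B$, which pairs negatively with one wall of the movable cone: contradiction (Lemma \ref{eff_misses_Y}). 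A divisorial component of $Z \setminus Z_0$ would lie in $W \setminus Z_0$ and close up in $\Xfour$ to exactly such a divisor. This positivity-and-lattice step is where $\rho(X)=1$ does its real work in the second assertion, and without a substitute for it your extension of the ideal-sheaf identification across codimension one remains unproven.
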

\noindent Thus $M \setminus M_0$ has codimension at least 2 in $M$ as well, because $u$ is a $\P^2$-fibration over $Z \setminus Y$; and thus the inclusions $Z_0 \hookrightarrow Z$ and $M_0 \hookrightarrow M$ induce isomorphisms on $H^2$.

The reader may wonder why we need Proposition \ref{key_prop}, when we can say immediately that because $Z$ and $\Xfour$ have nef canonical bundles, the birational map $Z \dashrightarrow \Xfour$ is biregular on an open set $W$ containing $Z_0$ whose complement has codimension at least 2 \cite[Cor.~3.54]{km}.  The reason is that we would not know anything about the restriction of $T$ to $X \times u^{-1}(W)$, which we need in what follows.
\bigskip

Consider the functor $D^b(M) \to D^b(Y)$ induced by $I_C \otimes \O_Y(2h)$, the functor $D^b(Y) \to D^b(X)$ induced by $I_\Gamma$, their composition
\begin{equation} \label{first_comp}
D^b(M) \xrightarrow{I_C(2h)} D^b(Y) \xrightarrow{I_\Gamma} D^b(X),
\end{equation}
and its left adjoint 
\begin{equation} \label{left_adj}
D^b(X) \xrightarrow{I_\Gamma^\vee[2]} D^b(Y) \xrightarrow{I_C^\vee(-5h)[4]} D^b(M),
\end{equation}
which it will later be convenient to rewrite as
\begin{equation} \label{left_adj'} \tag{$7'$}
D^b(X) \xrightarrow{I_\Gamma^\vee(-2h)[2]} D^b(Y) \xrightarrow{I_C^\vee(-3h)[4]} D^b(M).
\end{equation}
We post-compose with the restriction
\begin{equation} \label{with_restriction}
D^b(X) \xrightarrow{I_\Gamma^\vee(-2h)[2]} D^b(Y) \xrightarrow{I_C^\vee(-3h)[4]} D^b(M) \xrightarrow{i^*} D^b(M_0).
\end{equation}
The first composition \eqref{first_comp} is induced by $T$, so the left adjoint \eqref{left_adj} or \eqref{left_adj'} is induced by $T^\vee[2]$, so the restriction \eqref{with_restriction} is induced by $T^\vee|_{X \times M_0}[2]$.

Because $T[1]|_{X \times M_0}$ is an $M_0$-flat family of ideal sheaves with classifying map $t'\colon M_0 \to \Xfour$, there is a line bundle $\mathcal L$ on $M_0$ such that
\[ T[1]|_{X \times M_0} = (1 \times t')^* I_\Xi \otimes \mathcal L, \]
where again $\Xi \subset X \times \Xfour$ is the universal subscheme.\footnote{For a careful proof that Hilbert schemes are moduli spaces of stable sheaves when $H^1(\O) = 0$, see \cite[Lem.~B.5.6]{kps}.}  So the composition \eqref{with_restriction} agrees with
\[ D^b(X) \xrightarrow{I_\Xi^\vee[3]} D^b(\Xfour) \xrightarrow{t'^*} D^b(M_0) \xrightarrow{\otimes \mathcal L^\vee} D^b(M_0). \]
Because $t' = t \circ u$, this is
\[ D^b(X) \xrightarrow{I_\Xi^\vee[3]} D^b(\Xfour) \xrightarrow{t^*} D^b(Z_0) \xrightarrow{u^*} D^b(M_0) \xrightarrow{\otimes \mathcal L^\vee} D^b(M_0). \]
Passing to topological K-theory \cite[\S2.1]{at}, and taking first Chern classes where shown, we get a diagram
\begin{equation} \label{five_steps}
\xymatrix{
\Ktop(X) \ar[r]^-{-I_\Xi^\vee} &
\Ktop(\Xfour) \ar[r]^-{t^*} \ar[d]_{c_1} &
\Ktop(Z_0) \ar[r]^-{u^*} \ar[d]_{c_1} &
\Ktop(M_0) \ar[r]^-{\cdot[\mathcal L^\vee]} \ar[d]_{c_1} &
\Ktop(M_0) \ar[d]_{c_1} \\
&
H^2(\Xfour,\Z) \ar[r]^-{t^*}_-{\sim} &
H^2(Z_0,\Z) \ar@{^(->}[r]^-{u^*} &
H^2(M_0,\Z) \ar@{-->}[r] &
H^2(M_0,\Z),
} \end{equation}
where there is no well-defined map to fill in the dashed arrow, and the injectivity of the lower $u^*$ follows from Lemma \ref{inj-lemma} and Proposition \ref{key_prop}. 

The map $D^b(\Xfour) \xrightarrow{I_\Xi} D^b(X)$ takes the the skyscraper sheaf of a point to $I_\xi$, where $\xi \subset X$ is a subscheme of length 4, so the left adjoint $I_\Xi^\vee[2]$ and its induced map on K-theory
\[ \Ktop(X) \xrightarrow{I_\Xi^\vee} \Ktop(\Xfour) \]
takes classes in $[I_\xi]^\perp$ to classes of rank 0.  Thus in the big diagram \eqref{five_steps}, if we replace $\Ktop(X)$ with the subspace $[I_\xi]^\perp$ then multiplying by $[\mathcal L^\vee]$ does not affect the first Chern class, so we can forget the last column of the diagram.

The composition
\[ [I_\xi]^\perp \subset \Ktop(X) \xrightarrow{I_\Xi^\vee} \Ktop(\Xfour) \xrightarrow{c_1} H^2(\Xfour, \Z) \]
is a Hodge isometry by O'Grady's classic calculation \cite{og}.  Moreover the map $t^*$ is an isomorphism on $H^2$, so the big diagram \eqref{five_steps} gives a Hodge isometry from $[I_\xi]^\perp \subset \Ktop(X)$ to $u^*(H^2(Z_0,\Z)) \subset H^2(M_0,\Z)$.  Because $M \setminus M_0$ and $Z \setminus Z_0$ have codimension at least 2, this is the same as $u^*(H^2(Z,\Z)) \subset H^2(M,\Z)$.

If we take \eqref{with_restriction}, pass to topological K-theory, and include first Chern classes, we get a diagram
\begin{equation} \label{second_big_diagram}
\begin{gathered} \xymatrix{
\Ktop(X) \ar[r]^-{I_\Gamma^\vee(-2h)} &
\Ktop(Y) \ar[r]^-{I_C^\vee(-3h)} &
\Ktop(M) \ar[r]^-{i^*} \ar[d]_{c_1} &
\Ktop(M_0) \ar[d]_{c_1} \\
& &
H^2(M,\Z) \ar[r]^-{i^*}_-{\sim} &
H^2(M_0,\Z).
} \end{gathered}
\end{equation}
In \cite[Prop.~3]{al} it is proved that the functor
\[ D^b(X) \xrightarrow{I_\Gamma^\vee(-3h)[4]} D^b(Y) \]
is fully faithful, and its image is $\langle \O_Y(-h), \O_Y, \O_Y(h) \rangle^\perp$.  In this paper we are taking $\A = \langle \O_Y, \O_Y(h), \O_Y(2h) \rangle^\perp$, so we should use the equivalence
\[ D^b(X) \xrightarrow{I_\Gamma^\vee(-2h)[4]} \A. \]
We claim that the induced map on topological K-theory takes $[I_\xi]$ to our class $\lambda_2 - \lambda_1$.  To see this, first observe that the functor $D^b(Y) \xrightarrow{I_\Gamma(-h)} D^b(X)$ takes the skyscraper sheaf of a point $\O_y$ to some $I_\xi$, so its right adjoint $D^b(X) \xrightarrow{I_\Gamma^\vee(-2h)[4]} D^b(Y)$ takes $I_\xi$ to the projection of $\O_y$ into $\A$.  In K-theory we have $[\O_y] = [\O_\ell(2)] - [\O_\ell(1)]$, where $\ell$ is a line on $Y$, so the class of the projection of $\O_y$ is $\lambda_2 - \lambda_1$.

Thus in our second big diagram \eqref{second_big_diagram}, the first map takes $[I_\xi]^\perp$ isometrically to $(\lambda_2 - \lambda_1)^\perp \subset \Ktop(\A) \subset \Ktop(Y)$.  We conclude that the map \eqref{map2} takes $(\lambda_2 - \lambda_1)^\perp$ isometrically to $u^*(H^2(Z,\Z))$, as desired.

\subsection{Proof of Proposition \ref*{key_prop}} \label{sec1.3}

Now we recall the construction of the Pfaffian cubic fourfold $Y$, the K3 surface $X$, and the correspondence $\Gamma \subset X \times Y$ in detail.  Fix a vector space $V \cong \C^6$ and a generic 6-dimensional subspace $L \subset \Lambda^2 V^*$.  The cubic is
\[ Y = \Bigl\{ [\varphi] \in \P(L) \Bigm| \rank(\phi) = 4 \Bigr\} = \Bigl\{ [\varphi] \in \P(L) \Bigm| \varphi^3 = 0 \Bigr\}, \]
the K3 surface is
\[  X=\Bigl\{[P]\in\Gr(2,V) \Bigm| \varphi|_P=0\text{ for all }\phi \in L \Bigr\}, \]
and the correspondence is
\[ \Gamma = \Bigl\{ ([P], [\varphi]) \in X\times Y \Bigm| P\cap \ker\varphi \ne 0 \Bigr\}. \]
For a generic choice of $L$, both $X$ and $Y$ are smooth, $X$ does not contain a line, and $Y$ does not contain a plane.

In \cite[Lem.~4]{al} it is proved that $\Gamma$ is generically 4-to-1 over $Y$.  We improve this as follows:
\begin{lem} \label{Gamma_flat}
If $X$ does not contain a $(-2)$-curve then $\Gamma$ is flat over $Y$.
\end{lem}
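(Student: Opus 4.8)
The plan is to deduce flatness from the standard ``miracle flatness'' criterion: a morphism from a Cohen--Macaulay scheme to a smooth variety all of whose fibres have the expected dimension is flat. Since $Y$ is smooth of dimension $4$ and $\Gamma\to Y$ is generically $4$-to-$1$ by \cite[Lem.~4]{al}, it suffices to verify that $\Gamma$ is Cohen--Macaulay of pure dimension $4$ and that \emph{every} fibre of $\Gamma\to Y$ is $0$-dimensional.

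To get the Cohen--Macaulay property I would present $\Gamma$ as a determinantal locus of expected codimension. On $X$ the tautological rank-$2$ subbundle $\mathcal P\subset V\otimes\O_X$ is isotropic for every $\varphi\in L$, so on the smooth variety $X\times\P(L)$ there is a bundle map $\bar\varphi\colon\mathcal P\to(V/\mathcal P)^*$, $p\mapsto\varphi(p,-)$, and $\Gamma$ is exactly the locus where $\bar\varphi$ has rank $\le 1$ (since $\rank\bar\varphi\le1$ produces $0\ne p\in P\cap\ker\varphi$, which in turn forces $[\varphi]\in Y$, this locus automatically lies in $X\times Y$; note $\rank\varphi=4$ on all of $Y$ for the generic $L$ used in the construction, so $\ker\varphi$ is always a plane). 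For a map of bundles of ranks $2$ and $4$ the rank-$\le1$ locus has expected codimension $(2-1)(4-1)=3$, and every component has codimension at most $3$, i.e.\ dimension at least $4$. If in addition every fibre over $Y$ is $0$-dimensional, then every component has dimension at most $\dim Y=4$, so $\Gamma$ has pure codimension $3$ and is Cohen--Macaulay by the standard theory of determinantal varieties; being generically reduced (the generic fibre is four distinct points) and $S_1$, it is reduced, hence genuinely a subscheme of $X\times Y$ mapping to $Y$.

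The heart of the matter is therefore the claim that every fibre $F_{\varphi_0}=\{P\in X: P\cap\ker\varphi_0\ne0\}$ is finite. Projectivising, $P\in F_{\varphi_0}$ iff the line $\P(P)\subset\P(V)$ meets the fixed line $\Lambda=\P(\ker\varphi_0)$. The hypothesis enters as follows: a line on $X$ would be a smooth rational curve, i.e.\ a $(-2)$-curve, so $X$ contains no line; equivalently, through each point of $\P(V)$ passes at most one of the lines $\P(P)$, $P\in X$. Thus the lines $\{\P(P)\}_{P\in X}$ are pairwise disjoint, and the scroll $R=\bigcup_{P\in X}\P(P)$ is the injective image of the $\P^1$-bundle $\P(\mathcal P)\to X$. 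Consequently each point of $R$ lies on a unique $\P(P)$, so if $\Lambda\not\subset R$ then $\Lambda\cap R$ is finite and so is $F_{\varphi_0}$; and if $\ker\varphi_0\in X$ then $\Lambda$ is itself a ruling, and disjointness gives $F_{\varphi_0}=\{\ker\varphi_0\}$, again finite.

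It remains to exclude the case $\Lambda\subset R$ with $\ker\varphi_0\notin X$, and this is the step I expect to be the main obstacle. Here $\Lambda$ meets every ruling, defining a morphism $\Lambda\to X$ sending $q$ to the unique $P\ni q$; I would argue that this is a closed immersion --- it is injective by disjointness, and an immersion because a line $\Lambda$ cannot be tangent to a ruling line $\P(P)$ unless the two coincide, which would force $\ker\varphi_0\in X$ --- so its image is a smooth rational curve, i.e.\ a $(-2)$-curve, contradicting the hypothesis. The delicate points are verifying that this assignment is a genuine morphism with reduced fibres (so that it is unramified) and that no tangency to the ruling occurs; once these are settled, every fibre of $\Gamma\to Y$ is $0$-dimensional, and miracle flatness completes the proof.
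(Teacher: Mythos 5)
Your proposal is correct and is essentially the paper's own proof in projectively dual clothing: your dichotomy (either $\Lambda\cap R$ is finite, or $\Lambda\subset R$ and $q\mapsto P(q)$ defines a morphism $\Lambda\to X$ whose image is a smooth rational curve) is exactly the paper's analysis of the preimage of $\Gamma_\varphi$ in the normalization $\tilde\Sigma_\varphi$, a $\P^4$-bundle over $\P(\ker\varphi)=\Lambda$ --- each $\P^4$ fiber meets the preimage in at most one point, else $X$ contains a line, and only finitely many fibers are met, else one gets a section (i.e.\ your morphism $\Lambda\to X$), hence a smooth rational curve on $X$. The only packaging difference is the flatness bookkeeping: where you invoke determinantal Cohen--Macaulayness and miracle flatness, the paper simply quotes the proof of \cite[Lem.~4]{al} to conclude that the zero-dimensional fibers all have length $4$.
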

\begin{proof}
Let $\Gamma_\varphi \subset X$ be the scheme-theoretic fiber of $\Gamma$ over a point $[\varphi] \in Y$.  We will argue that $\Gamma_\varphi$ is zero-dimensional; then by the proof of \cite[Lem.~4]{al} it has length 4.

Consider the Schubert cycle
\[ \Sigma_\varphi = \Bigl\{ [P] \in \Gr(2,V) \Bigm| P \cap \ker \varphi \ne 0 \Bigr\} \]
of which $\Gamma_\varphi$ is a linear section, and the normalization
\[ \tilde\Sigma_\varphi = \Bigl\{ (l,P) \in \P(\ker\varphi) \times \Gr(2,V) \Bigm| l \subset P \Bigr\}. \]
We observe that $\tilde\Sigma_\varphi$ is a $\P^4$-bundle over $\P(\ker\varphi) = \P^1$.  First we claim that the preimage of $\Gamma_\varphi$ in $\tilde\Sigma_\varphi$ meets each $\P^4$ fiber in at most one point, even scheme-theoretically: to see this, note that the $\P^4$s are embedded linearly in $\Gr(2,V) \subset \P(\Lambda^2 V)$, so if a linear section contained more than one point of a $\P^4$ it would contain a line, contradicting our hypothesis on $X$.

Next we claim that the preimage of $\Gamma_\varphi$ in $\tilde\Sigma_\varphi$ meets at most finitely many $\P^4$ fibers.  Otherwise it would meet them all, hence would give a section of this $\P^4$-bundle over $\P^1$, hence a smooth rational curve on $X$, again contradicting our hypothesis.

Thus the preimage of $\Gamma_\varphi$ in $\tilde\Sigma_\varphi$ is zero-dimensional, so $\Gamma_\varphi$ is as well.
\end{proof}

\begin{lem} \label{Pullbacks}
Suppose that $X$ has Picard rank 1, so in particular $X$ contains no $(-2)$-curve and thus $\Gamma$ induces a regular map $j\colon Y \to \Xfour$.  Let $\tilde H$ and $B$ be the basis for $\Pic(\Xfour)$ used in \cite[\S13]{bm}.  Then
\begin{align*}
j^* B &= 9 h \\
j^* \tilde H &= 14 h
\end{align*}
where $h \in \Pic(Y)$ is the hyperplane class.
\end{lem}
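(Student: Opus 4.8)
The plan is to use that every smooth cubic fourfold has $\Pic(Y)=\Z h$, so that $j^*\tilde H$ and $j^*B$ are each a (single) integer times $h$ and the whole problem becomes the determination of two integers. Throughout I would work with the two projections $p\colon\Gamma\to X$ and $q\colon\Gamma\to Y$. By Lemma \ref{Gamma_flat} the map $q$ is finite flat of degree $4$, while $p$ exhibits $\Gamma$ as the total space of the universal family of quartic scrolls, i.e.\ a surface-bundle over $X$. The point that makes everything computable is that $j$ is the classifying map of the flat family $\Gamma$, so the universal length-$4$ subscheme $\Xi\subset X\times\Xfour$ satisfies $(1\times j)^*\Xi=\Gamma$; hence the two tautological classes on $\Xfour$ pull back through the correspondence in a transparent way.

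For $\tilde H$: recall that $\tilde H$ is the symmetrization of the polarization, $\tilde H=\pi_{\Xfour\,*}\bigl([\Xi]\cdot\pi_X^*H\bigr)$. Pulling back along $j$ and using $(1\times j)^*\Xi=\Gamma$ gives $j^*\tilde H=q_*p^*H$. To extract the coefficient I would cap with $h^3=3[\ell]$ (where $\ell$ is a line and $h^4=3$); the projection formula turns $j^*\tilde H\cdot h^3$ into $\int_{q^{-1}(\ell)}p^*H$, which is the $H$-degree of the curve $D_\ell\subset X$ parametrizing quartic scrolls that meet a fixed general line $\ell\subset Y$. So this reduces to identifying $D_\ell$: I expect it to lie in $|H|$, whence its degree is $H^2=14$ and $j^*\tilde H=14h$. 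This is the easier of the two computations, since it only uses the divisor class of $D_\ell$, which in $\Pic(X)=\Z H$ is forced once the degree is known.

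For $B$: I would use that $2B$ is the boundary divisor of non-reduced subschemes, equivalently that $-B=c_1(\mathcal O_X^{[4]})$ for the rank-$4$ tautological bundle; either description gives the operative identity $j^*B=-c_1(q_*\mathcal O_\Gamma)$, and correspondingly $j^*(2B)$ is the discriminant divisor of the degree-$4$ cover $q$, of class $-2c_1(q_*\mathcal O_\Gamma)$. There are two natural ways to evaluate $c_1(q_*\mathcal O_\Gamma)=c\cdot h$. One is Grothendieck--Riemann--Roch for $q$, which expresses $c$ through $q_*c_1(T_\Gamma)$ and reduces to the canonical class of the scroll-bundle $\Gamma$. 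The cleaner route, I think, is to restrict to a general line: $q^{-1}(\ell)\to\ell\cong\P^1$ is a degree-$4$ cover, and Riemann--Hurwitz gives $j^*B\cdot\ell=g+3$, where $g$ is the geometric genus of $q^{-1}(\ell)$, the normalization of $D_\ell$. To land on $j^*B=9h$ one needs $g=6$; since $D_\ell\in|H|$ has arithmetic genus $8$ by adjunction on the K3 surface, this amounts to showing that $D_\ell$ acquires exactly two double points, so that normalizing drops the genus by two.

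The main obstacle is precisely this last point --- the singularities, hence the geometric genus, of the incidence curve $D_\ell$, equivalently the exact value of $c_1(q_*\mathcal O_\Gamma)$. The $\tilde H$ side is insensitive to it, but the $B$ side depends on how two of the four scrolls through a point of $\ell$ come together as that point moves along $\ell$, and pinning this down requires a genuine local analysis of the Pfaffian/scroll geometry (or, equivalently, carrying out the Grothendieck--Riemann--Roch computation using the description of $\Gamma$ and its Chern classes from \cite{al}).
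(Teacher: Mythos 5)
Your framework is sound and in fact coincides with the paper's starting point: the identity $j^*B=-c_1\bigl(q_*\O_\Gamma\bigr)$ (via the Cartesian square $(1\times j)^*\Xi=\Gamma$ and Lehn's formula $B=-c_1(p'_*\O_\Xi)$) and the description of $j^*\tilde H$ as the pushforward of $p^*H$ are exactly the reductions the paper makes. But the proposal never actually produces either integer, and you say as much yourself. For $\tilde H$, everything hinges on the assertion that the incidence curve $D_\ell$ lies in $|H|$ rather than $|mH|$ for some $m\ge 2$; since $\Pic(X)=\Z H$, the class of $D_\ell$ is determined by its $H$-degree, which is precisely the quantity being computed --- so ``the class is forced once the degree is known'' is circular, and ``I expect it to lie in $|H|$'' is a guess fitted to the announced answer, not an argument. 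For $B$, the Riemann--Hurwitz reduction $j^*B\cdot\ell=g+3$ is correct granted that $q^{-1}(\ell)$ is smooth and maps birationally onto $D_\ell$ for general $\ell$ (also unverified), but the required input $g=6$ is equivalent to $D_\ell\in|H|$ acquiring exactly two nodes, and you explicitly concede that pinning this down ``requires a genuine local analysis'' that you do not carry out. Since the entire content of the lemma is the two numbers $9$ and $14$, deferring both number-producing steps leaves the lemma unproved.

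For comparison, the paper sidesteps the delicate geometry of $D_\ell$ entirely: it computes $c_1(q_*\O_\Gamma)$ and $c_1\bigl(q_*(\O_\Gamma\otimes p^*\O_H)\bigr)$ in closed form by applying Grothendieck--Riemann--Roch to the embedding $Y\hookrightarrow\P^5$, resolving $\O_\Gamma$ by the Eagon--Northcott complex attached to the degeneracy-locus description of $\Gamma\subset X\times\P^5$ (a map $\mathcal P\boxtimes\O\to\mathcal Q^\vee\boxtimes\O(1)$ of bundles pulled back from $\Gr(2,V)$), and evaluating the resulting Schubert calculus by machine. If you want to salvage your more geometric route, the missing steps are: (i) an independent computation of $\deg_H D_\ell$, e.g.\ by intersecting $p_*q^*[\ell]$ with a curve of known class, and (ii) a proof that for general $\ell$ the cover $q^{-1}(\ell)\to\ell$ is smooth and its image has exactly two nodes --- and step (ii) is essentially as hard as the Chern-class computation it replaces, since it amounts to controlling the discriminant of the degree-$4$ cover $\Gamma\to Y$ along a line.
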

\begin{proof}
By construction of $j\colon Y \to \Xfour$ we have a Cartesian diagram
\[\xymatrix{
\Gamma \ar[r] \ar[d]^{p} & \Xi \ar[d]^{p'} \\
Y \ar[r]^j & \Xfour .
} \]
By \cite[Lem.~3.7]{lehn} we have $B = -c_1 (p'_* \O_\Xi)$, and thus
\[ j^*B = -c_1(p_* \O_\Gamma). \]
Applying the Grothendieck--Riemann--Roch formula to the embedding \linebreak $i\colon Y \hookrightarrow \P^5$, we get
\begin{equation} \label{to_compute}
i_* \ch(p_* \O_\Gamma) = \ch(i_* p_* \O_\Gamma)\cdot \td(\O_{\P^5}(3)).
\end{equation}
Let $0 \to \mathcal P \to \O_X \otimes V \to \mathcal Q \to 0$ be the restriction to $X$ of the  tautological bundle sequence on $\Gr(2,V)$.  In \cite[\S2]{al} it is argued that $(1 \times i)(\Gamma) \subset X \times \P^5$ is the degeneracy locus of a map $\mathcal P \boxtimes \O_{\P^5} \to \mathcal Q^\vee \boxtimes \O_{\P^5}(1)$, of the expected dimension.  Thus with the Eagon--Northcott resolution of $(1 \times i)_* \O_\Gamma$ and a lot of Schubert calculus, we can compute the right-hand side of \eqref{to_compute}.  We carried out this computation by hand and checked it with the Schubert2 package of Macaulay2 \cite{M2}; we include our code in the ancillary file \verb|Gamma.m2|.  The result is
\[ i_* \ch(p_* \O_\Gamma) =
12h - 27h^2 + \frac{65}{2}h^3 - \frac{33}{2}h^4 + \frac{19}{8}h^5 \in H^*(\P^5,\Q). \]
Because $i_*(h) = 3h^2$, we read off $c_1(p_*\O_\Gamma) = -9h$, so
\[j^*B = 9h \in H^2(Y,\Q). \]

To find $j^* \tilde H$, let $H \subset X$ be a hyperplane section, and let $q'\colon \Xi \to X$ be the natural projection.  Then $p'_* q'^* \O_H$ is a sheaf supported on the locus of 4-tuples of which one is contained in $H$, that is, on $\tilde H$; and this sheaf has generic rank 1 on its support, so its first Chern class is $\tilde H$.  Using the resolution
\[ 0 \to \O_X(-H) \to \O_X \to \O_H \to 0 \]
we find that
\[ i_*\ch(p_*(\O_\Gamma \otimes q^*\O_H)) = 42h^2 - 91 h^3 + 56h^4 - \frac{35}{4}h^5. \]
Again the code is included in \verb|Gamma.m2|.  Taking the coefficient of $h^2$ and dividing by 3, we get $j^*(\tilde H) = 14h$ as desired. 
\end{proof}

\begin{lem} \label{eff_misses_Y}
Suppose that $X$ has Picard rank 1, so again $\Gamma$ induces a regular map $j\colon Y \to \Xfour$.  Then every effective divisor on $\Xfour$ meets $j(Y)$.
\end{lem}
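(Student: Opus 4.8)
The plan is to translate the statement into the non-effectivity of a single divisor class, and then to locate that class outside the pseudoeffective cone of $\Xfour$.

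First I would set up the reduction. Suppose, for contradiction, that $D$ is an effective divisor with $D \cap j(Y) = \emptyset$. Writing $D$ as the zero locus of a section $s$ of $\O_{\Xfour}(D)$, the pullback $j^*s$ is a nowhere-vanishing section of $j^*\O_{\Xfour}(D)$, so $j^*D = 0$ in $H^2(Y,\Z) = \Z h$. (If instead $j(Y)$ were contained in the support of $D$, then $D$ would meet $j(Y)$ trivially, so this disjoint case is the only one to rule out.) By Lemma \ref{Pullbacks} we have $j^*(a\tilde H + bB) = (14a + 9b)h$, so $j^*D = 0$ forces $D$ to be a rational multiple of $9\tilde H - 14B$. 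Thus it suffices to show that neither $9\tilde H - 14B$ nor its negative is pseudoeffective.

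Next I would record the relevant lattice data. In the basis of \cite[\S13]{bm}, the Beauville--Bogomolov--Fujiki form on $H^2(\Xfour,\Z)$ satisfies $(\tilde H, \tilde H) = \deg X = 14$, $(B,B) = -6$, and $(\tilde H, B) = 0$; in particular $(9\tilde H - 14B,\, 9\tilde H - 14B) = 81\cdot 14 - 196\cdot 6 = -42 < 0$. The class $-(9\tilde H - 14B)$ is disposed of immediately: by the Fujiki relation $\tilde H^7\cdot B$ is a multiple of $(\tilde H,\tilde H)^3(\tilde H,B) = 0$ while $\tilde H^8 > 0$, so pairing any pseudoeffective class with $\tilde H^7$ (a non-negative pairing, since $\tilde H$ is nef) shows its $\tilde H$-coefficient is $\ge 0$; the class $-9\tilde H + 14B$ fails this. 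The remaining point, and the heart of the proof, is that $9\tilde H - 14B$ itself is not pseudoeffective. Here I would invoke the Bayer--Macr\`i description of the movable and effective cones \cite{bm}. Realizing $\Xfour = M_H(v)$ with $v = (1,0,-3)$ and $v^2 = 6$, one computes in the rank-one algebraic Mukai lattice that $\tilde H = \theta(0,H,0)$ and $B = \theta(1,0,3)$, and that the only divisorial wall is the Hilbert--Chow wall spanning the ray $\R_{\ge 0}B$. Indeed a Brill--Noether wall would require a spherical class in $v^\perp$, i.e.\ an integer solution of $3r^2 - 7x^2 = 1$, which is impossible modulo $3$; and a short search shows no isotropic class yields a Hilbert--Chow or Li--Gieseker--Uhlenbeck wall on the opposite side. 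Consequently the effective cone is spanned by $B$ and by the boundary ray of the positive cone with $a>0$, $b<0$, which has slope $-\sqrt{(\tilde H,\tilde H)/|(B,B)|} = -\sqrt{7/3}$. Since $(14/9)^2 = 196/81 > 189/81 = 7/3$, the class $9\tilde H - 14B$ is strictly steeper and hence lies outside the pseudoeffective cone, completing the contradiction.

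I expect the main obstacle to be this last step: confirming that the effective cone does not extend past the null ray on the side of $9\tilde H - 14B$. The numerics are a genuine near-miss --- $196/81 \approx 2.42$ against $7/3 \approx 2.33$ --- so the conclusion really hinges on the absence of any rigid effective divisor in that direction, which is exactly what the Bayer--Macr\`i wall analysis and the small congruence computation above provide.
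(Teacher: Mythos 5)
Your reduction to the single class $9\tilde H - 14B$, your lattice data, and your disposal of $-(9\tilde H-14B)$ are all fine, and your overall strategy (Bayer--Macr\`i cone analysis) is the same as the paper's. But the decisive step contains a genuine error: the pseudoeffective cone of $\Xfour$ is \emph{not} bounded by the negative null ray. Your wall search is incomplete. You correctly rule out the Brill--Noether case ($3r^2-7x^2=1$ has no solution mod $3$; this is exactly the paper's check placing the computation in case (c) of \cite[Prop.~13.1]{bm}), but case (c) then \emph{does} produce a divisorial wall on the opposite side, coming from the Pell equation $X^2-21Y^2=1$ with minimal solution $(55,12)$: the movable cone is spanned by $\tilde H$ and $55\tilde H - 84B$. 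Since $q(55\tilde H-84B)=14\cdot 55^2-6\cdot 84^2=14>0$, the movable cone ends strictly inside the positive cone, so dually (by Boucksom, or the weaker statement from \cite{ht_movable} that the paper uses) the pseudoeffective cone extends strictly \emph{beyond} the null ray, to the ray spanned by $36\tilde H - 55B$, which has $q=-6$. This effective class violates your claimed bound: $(55/36)^2 = 3025/1296 > 7/3 = 3024/1296$. So your assertion that ``no isotropic class yields a Hilbert--Chow or Li--Gieseker--Uhlenbeck wall on the opposite side'' is false, and the principle you lean on --- steeper than the null ray implies not pseudoeffective --- is invalid for hyperk\"ahler manifolds in general (it already fails for $B$ itself, which is effective with $q(B)=-6$).

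Your conclusion happens to survive, because $196/81 = 3136/1296 > 3025/1296$, so $9\tilde H - 14B$ lies outside the \emph{true} pseudoeffective cone as well; equivalently, as the paper computes, $q(9\tilde H-14B,\tilde H)=126>0$ while $q(9\tilde H - 14B,\, 55\tilde H - 84B) = 6930 - 7056 = -126 < 0$, so no multiple of $9\tilde H-14B$ pairs non-negatively with both walls of the movable cone. But as written your proof is wrong at its central step, and the ``genuine near-miss'' you flag is measured against the wrong boundary: the real near-miss is $3025/1296$ versus $3024/1296$, i.e., precisely the Pell solution $(55,12)$ your wall analysis missed. To repair the argument you must compute the actual movable-cone boundary from \cite[Prop.~13.1]{bm} case (c) (with the sign typo in \cite[eq.\,(33)]{bm} corrected as in Debarre) and check the pairing of $9\tilde H-14B$ against it, which is exactly what the paper does.
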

\begin{proof}
First we calculate the movable cone of $\Xfour$, applying Bayer and Macr\`i's \cite[Prop.~13.1]{bm} with $d=7$ and $n=4$.  Then $d(n-1) = 21$ is not a perfect square; and $3X^2-7Y^2 = 1$ has no solution, as we see by reducing mod 3; so we are in case (c).  The minimal positive solution to $X^2 - 21Y^2 = 1$ is $X = 55$, $Y=12$.  This satisfies $X \equiv 1 \pmod 3$ rather than $X \equiv -1 \pmod 3$, contradicting \cite[eq.\,(33)]{bm}, but the latter contains a typo which Debarre corrects in \cite[Example~3.20]{debarre}.  Thus the movable cone is spanned by
\[ \tilde H \qquad \text{and} \qquad 55\tilde H - 84B. \]
The same information can in principle be obtained from work of Markman \cite{survey_of_torelli}, which does not use Bridgeland stability conditions, but \cite{bm} is more user-friendly.

Next we need to know that if $D$ is an effective divisor on $\Xfour$ and $M$ is movable, then $q(D,M) \ge 0$, where $q$ is the Beauville--Bogomolov--Fujiki pairing.  This follows from the proof of \cite[Thm.~7]{ht_movable}.  In fact a result of Boucksom \cite[Prop.~4.4]{boucksom} implies that the pseudo-effective cone is dual to the closure of the movable cone, but we do not need the full strength of this.

Now if an effective divisor $D$ on $\Xfour$ does not meet $j(Y)$, then $j^*(D) = 0$, so from Lemma \ref{Pullbacks} we know that $D$ is a multiple of $9\tilde H - 14B$.  But we have
\begin{align*}
q(\tilde H, \tilde H) &= 14
& q(\tilde H, B) &= 0
& q(B,B) &= -6,
\end{align*}
and thus
\begin{align*}
q(9\tilde H - 14B, \tilde H) &= 126 &
q(9\tilde H - 14B, 55\tilde H - 84B) = -126,
\end{align*}
so no multiple of $9\tilde H - 14B$ pairs non-negatively with both walls of the movable cone.
\end{proof}

We conclude this section by deducing Proposition \ref{key_prop} from the lemmas above.  From \cite[\S3]{al} we know that the open immersion $t\colon Z_0 \to \Xfour$, restricted to $Y_0 := Y \cap Z_0$, agrees with the map $j\colon Y_0 \to \Xfour$ induced by $I_\Gamma$.  If $X$ has Picard rank 1 then $Y_0 = Y$ by Lemma \ref{Gamma_flat}.  The open immersion $t\colon Z_0 \to \Xfour$ gives a birational map $Z \dashrightarrow \Xfour$.  Because $Z$ and $\Xfour$ are minimal models, this birational map is biregular on an open set $W$ containing $Z_0$ whose complement has codimension at least 2 in both $Z$ and $\Xfour$.  If $Z \setminus Z_0$ has any component of codimension 1, then so too does $W \setminus Z_0$, and taking its closure in $\Xfour$ we get an effective divisor that does not meet $j(Y)$, contradicting Lemma \ref{eff_misses_Y}.

\subsection{Proof of Theorem \ref*{H2prim} for a very general cubic} \label{2to1}

By our discussion in \S\ref{reductions}, to prove Theorem \ref{H2prim} it is enough to show that for a single cubic $Y$, the map \eqref{map1} takes $H^4_\prim(Y,\Z)$ into $u^*(H^2_\prim(Z,\Z))$ and respects the pairings.  We deduce this from Theorem \ref{H2full} when $Y$ is not in any Noether--Lefschetz divisor.

Because $Y$ is Noether--Lefschetz general, the transcendental lattice of $\langle \lambda_2-\lambda_1 \rangle^\perp \subset \Ktop(\A)$ is $\langle \lambda_1,\lambda_2 \rangle^\perp$.  The transcendental lattice of $H^2(Z,\Z)$ is contained in $H^2_\prim(Z,\Z)$, so the Hodge isometry \eqref{iso2} takes $\langle \lambda_1,\lambda_2 \rangle^\perp$ into $u^*(H^2_\prim(Z,\Z))$; but they are primitive sublattices of the same rank, so \eqref{iso2} gives an isomorphism between them.

So we need to compare the action of \eqref{map2} on $\langle \lambda_1,\lambda_2 \rangle^\perp$ with the action of $\eqref{map1}$ on $H^4_\prim(Y,\Z)$.  In the diagram \eqref{Phi_commutes_with_v}, we know from \cite[Prop.~2.3]{at} that $v$ takes $\langle \lambda_1,\lambda_2 \rangle^\perp \subset \Ktop(\A)$ isometrically to $H^4_\prim(Y,\Z) \subset H^*(Y,\Q)$.  To finish proving Theorem \ref{H2prim}, it remains to show that the maps
\[ H^4_\prim(Y,\Z) \to H^2(M,\Q) \]
induced by $v(I_C^\vee(-3h)) \in H^*(Y \times M, \Q)$ and $[C] \in H^6(Y \times M, \Z)$ are the same.  To see this, observe that
\begin{align*}
\ch(I_C) &= 1 - [C] + \text{higher-order terms}, \\
\ch(I_C^\vee) &= 1 + [C] + \text{higher-order terms}.
\end{align*}
What happens when we map a class $\alpha \in H^4_\prim(Y)$ to $H^*(M,\Q)$ using $v(I_C^\vee(-3h))$?  First we multiply by $(\td Y)^{1/2}$ and $\ch(\O_Y(-3h))$, which are polynomials in the hyperplane class $h \in H^2(Y,\Z)$ and thus have no effect on $\alpha$.  Then we pull up to $H^*(Y \times M, \Q)$, multiply with $\ch(I_C^\vee)$, and push down to $H^*(M,\Q)$, which yields $[C]_* \alpha + {}$higher-order terms.  Then we multiply by $(\td M)^{1/2}$, which does not affect the leading term $[C]_* \alpha$.  Then we take the degree-2 part, leaving $[C]_* \alpha$ as desired.

\section{Proof of Theorem \ref*{birational}}

We adapt an argument developed in \cite{hassett_vs_galkin} and \cite[Prop.~4.1]{huybrechts_twisted} and polished in \cite[\S3.2]{huybrechts_survey}, staying especially close to the latter reference.

From Verbitsky's hyperk\"ahler Torelli theorem \cite{verbitsky, huybrechts_torelli, looijenga_torelli} and Markman's work of the monodromy of manifolds of K3$^{[n]}$-type \cite[\S9]{survey_of_torelli}, we know that if $n-1$ is a prime power, then two manifolds $M$ and $M'$ of K3$^{[n]}$-type are birational (or bimeromorphic) if and only if there is a Hodge isometry $H^2(M,\Z) \cong H^2(M',\Z)$.  If $n-1$ has two or more prime factors there is a more subtle statement, but this does not concern us since the eightfold $Z$ is of K3$^{[4]}$-type.  \bigskip

By \cite[Prop.~1.24 and Prop.~1.13]{huybrechts_survey}, a cubic fourfold $Y$ is in $\mathcal C_d$ for some $d$ satisfying \eqref{condition**} if and only if the Mukai lattice $\Ktop(\A)$ is Hodge-isometric to the Mukai lattice $\tilde H(S,\Z)$ of a K3 surface, and similarly with \eqref{condition**'} and the Mukai lattice $\tilde H(S,\alpha,\Z)$ of a twisted K3 surface.

If $\varphi\colon \Ktop(\A) \to \tilde H(S,\Z)$ is a Hodge isometry, let $v = \varphi(\lambda_2 - \lambda_1)$, which is a primitive vector satisfying $v^2 = 6$.  Then for a $v$-generic polarization $h \in \Pic(S)$, the moduli space $M := M_h(v)$ of $h$-stable sheaves with Mukai vector $v$ is a smooth variety of K3$^{[4]}$-type, and satisfies $H^2(M,\Z) \cong v^\perp \subset \tilde H(S,\Z)$.  Thus
\[ H^2(Z,\Z) \cong \langle \lambda_2 - \lambda_1 \rangle^\perp \cong v^\perp \cong H^2(M,\Z). \]
Conversely, any Hodge isometry $\langle \lambda_2 - \lambda_1 \rangle^\perp \cong v^\perp$ extends to a Hodge isometry $\Ktop(\A) \to \tilde H(S,\Z)$ taking $\lambda_2-\lambda_1$ to $\pm v$ by \cite[Cor.~1.5.2]{nikulin}: the discriminant group of the lattice $v^\perp$ is $\Z/6$ whose automorphism group is $\pm 1$.  With twisted K3 surfaces the argument is similar.  This proves parts (i) and (ii) of Theorem \ref{birational}. \bigskip

To prove part (iii), by \cite[Prop.~5]{hassett_vs_galkin} or by the proof of \cite[Prop.~3.4(i)]{huybrechts_survey} we see that $Z$ is birational to $S^{[4]}$ for some K3 surface $S$ if and only if the algebraic lattice $\Knum(\A) \subset \Ktop(\A)$ contains a copy of the hyperbolic plane $U$ which contains $\lambda_2 - \lambda_1$; or equivalently, if and only if there there is a vector $w \in \Knum(\A)$ with $\chi(w,w) = 0$ and $\chi(w,\lambda_2-\lambda_1) = 1$.\footnote{Huybrechts \cite{huybrechts_survey} and many other authors define the Mukai pairing on $\Ktop(\A)$ as having the opposite sign from the Euler pairing, in order to agree with the Beauville--Bogomolov pairing on $H^2$ of various hyperk\"ahler varieties, whereas Addington and Thomas \cite{at,hassett_vs_galkin} define it having the same sign as the Euler pairing.  In effect, the first convention regards $\Ktop(\A)$ as a Hodge structure of weight 2, the second as a Hodge structure of weight 0.  In the calculation that follows, to avoid confusion, we only use Euler pairing and do not mention the Mukai pairing.}

If there is such a $w$, let $L = \langle \lambda_1, \lambda_2, w \rangle \subset \Knum(\A)$, and let $n = \chi(w, \lambda_1)$, so the Gram matrix of the Euler pairing on $L$ is
\[ \begin{pmatrix}
-2 & 1 & n \\
1 & -2 & n+1 \\
n & n+1 & 0
\end{pmatrix} \]
Thus $\disc(L) = 6n^2 + 6n + 2$.  Let $M$ be the saturation of $L$ in $\Knum(\A)$, let $a$ be the index of $L$ in $M$, and let $d = \disc(M)$; then $a^2 d = \disc(L)$, so $d = (6n^2+6n+2)/a^2$.  By \cite[Prop.~2.5]{at} we have $Y \in \mathcal C_d$.

Conversely, suppose that $Y \in \mathcal C_d$ for some $d$ of the form $(6n^2+6n+2)/a^2$.  We claim that $a \equiv 1 \pmod 6$: to see this, observe that $6n^2+6n+2$ is the norm of the primitive vector $(n,-n-1)$ in the $A_2$ lattice, hence satisfies \eqref{condition**} by \cite[Prop.~1.13(iii)]{huybrechts_survey}, so $a$ must be a product of primes $p \equiv 1 \pmod 3$.  Thus $d \equiv 2 \pmod 6$; write $d = 6k+2$ and $a = 3m+1$.  By \cite[Lem.~9]{hassett_vs_galkin} there is an element $\tau \in \Knum(\A)$ such that $\langle \lambda_1, \lambda_2, \tau \rangle$ is a primitive sublattice on which the Gram matrix of the Euler pairing is
\[ \begin{pmatrix}
-2 & 1 & 0 \\
1 & -2 & 1 \\
0 & 1 & 2k
\end{pmatrix}. \]
Then the vector
\[ w := (m-n)\lambda_1 + (2m-n)\lambda_2 + a\tau \]
satisfies $\chi(w,w) = 0$ and $\chi(w, \lambda_2-\lambda_1) = 1$, as desired.

\bibliographystyle{plain}
\bibliography{Buona}

\begin{multicols}{2}
\scriptsize
\noindent
Nicolas Addington \\
Department of Mathematics \\
University of Oregon \\
Eugene, OR 97403 \\
United States \\
adding@uoregon.edu
\columnbreak

\noindent Franco Giovenzana \\
Fakult\"at f\"ur Mathematik\\
Technische Universit\"at Chemnitz \\
Reichenhainer Strasse 39 \\
09126 Chemnitz \\
Germany \\
franco.giovenzana@mathematik.tu-chemnitz.de
\end{multicols}

\end{document}